\tikzset{elegant/.style={smooth,thick,samples=50,cyan}}
\tikzset{eaxis/.style={->,>=stealth}}
\tikzset{liltext/.style={font=\tiny}}
\newtheorem{defn}{Definition}[section]
\newtheorem{thm}{Theorem}[section]
\newtheorem{prop}{Proposition}[section]
\newtheorem{lem}{Lemma}[section]
\newtheorem{rem}{Remark}[section]
\newcommand{\ml}{\mathcal}
\begin{document}

%
%
%
%
%
%
%
%

\title[Semilinear strongly damped wave equations in an exterior domain]
 {Blow-up of solutions to semilinear strongly damped wave equations with different nonlinear terms in an exterior domain}

\author[W. Chen]{Wenhui Chen}
\address{Institute of Applied Analysis, Faculty of Mathematics and Computer Science\\
	 Technical University Bergakademie Freiberg\\
	  Pr\"{u}ferstra{\ss}e 9\\
	   09596 Freiberg\\
	    Germany}
\email{wenhui.chen.math@gmail.com}

\author[A.Z. Fino]{Ahmad Z. Fino}

\address{Department of Mathematics, Faculty of Sciences\\
	 Lebanese University\\
	P.O. Box 826\\
	Tripoli\\
	Lebanon}
\email{ahmad.fino01@gmail.com; afino@ul.edu.lb}

\subjclass{Primary  35B44, 35L30, 35L76 ; Secondary 35B33, 35L25}

\keywords{{Semilinear damped wave equation, mild solution, blow-up, exterior domain, strong damping, mixed nonlinear terms.}
}
\date{October 13, 2019}

\begin{abstract} 
	In this paper, we consider the initial boundary value problem in an exterior domain for semilinear strongly damped wave equations with power  nonlinear term of the derivative-type $|u_t|^q$ or the mixed-type $|u|^p+|u_t|^q$, where $p,q>1$. On one hand, employing the Banach fixed-point theorem we prove local (in time) existence of mild solutions. On the other hand, under some conditions for initial data and the exponents of power nonlinear terms, the blow-up results are derived by applying the test function method.
\end{abstract}

\maketitle

\section{Introduction}
\setcounter{equation}{0} 
In the present paper, we study the initial boundary value problem of the semilinear strongly damped wave equations in an exterior domain, namely,
\begin{equation}\label{eq1}
\begin{cases}
u_{tt}-\Delta u -\Delta u_t =f(u,u_t), &x\in \Omega,\,t>0,\\
u(0,x)=  u_0(x),\,u_t(0,x)=  u_1(x),&x\in \Omega,\\
u=0,&x\in \partial\Omega,\,t>0,
\end{cases}
\end{equation} 
where $\Omega\subset\mathbb{R}^n$ is an exterior domain whose obstacle 
$\mathcal{O}\subset\mathbb{R}^n$ with $n\geqslant 1$ is bounded with smooth compact boundary $\partial\Omega$. Without loss of generality, we assume that $0\in\mathcal{O}\subset\subset B(R)$, where $B(R):=\left\{x\in\mathbb{R}^n:\;|x|< R\right\}$ denotes a ball with radius $R$ centered at the origin. Precisely, the nonlinear terms on the right-hand sides of the equation in \eqref{eq1} can be taken by the forms
\begin{align}\label{Nonlinearity}
f(u,u_t)=|u_t|^q\quad\mbox{or}\quad f(u,u_t)=|u|^p+|u_t|^q
\end{align}
with $p,q>1$. Our main goals in this paper are to derive local (in time) existence of mild solution and blow-up of solutions under some assumptions on initial data and the exponents $p,q$. Especially, we are interested in the combined effects and the interplay between the power nonlinearity $|u|^p$ and nonlinearity of derivative-type $|u_t|^q$. 

Let us recall some results related to our problem \eqref{eq1}. In recent years, the strongly damped wave equation has caught a lot of attention. 

First of all, we consider the Cauchy problem for strongly damped wave equations. Concerning the linearized Cauchy problem 
\begin{equation}\label{Eq.Linear.SDW}
\begin{cases}
u_{tt}-\Delta u-\Delta u_t=0,&x\in\mathbb{R}^n,\,t>0,\\
u(0,x)=  u_0(x),\,u_t(0,x)=  u_1(x),&x\in \mathbb{R}^n,
\end{cases}
\end{equation}
the papers \cite{Ponce1985} and \cite{Shibata2000} derived $L^p-L^q$ estimates not necessary on the conjugate line. Later, asymptotic profiles of solutions to \eqref{Eq.Linear.SDW} in a framework of weighted $L^1$ data were obtained in  \cite{Ikehata2014}. For asymptotic profiles of solutions to the corresponding abstract form of \eqref{Eq.Linear.SDW} were derived in  \cite{IkehataTodorovaYordanov2013}. Moreover, \cite{DAbbiccoReissig2014} deduced the $L^2-L^2$ estimate with additional $L^1$ regularity for \eqref{Eq.Linear.SDW}, which provides a useful tool to prove global existence result for the semilinear Cauchy problem. Furthermore, \cite{DAbbiccoReissig2014} considered the corresponding semilinear Cauchy problem to \eqref{Eq.Linear.SDW} with power nonlinearity, i.e.,
\begin{equation}\label{Eq.Semilinear.SDW}
\begin{cases}
u_{tt}-\Delta u-\Delta u_t=|u|^p,&x\in\mathbb{R}^n,\,t>0,\\
u(0,x)=  u_0(x),\,u_t(0,x)=  u_1(x),&x\in \mathbb{R}^n,
\end{cases}
\end{equation}
where $p>1$. They proved global (in time) existence of small data solutions (GESDS) for $n\geqslant2$ if $p\in[2,n/(n-4)]$ and $p>1+3/(n-1)$. On the contrary, by applying the test function method, the result for nonexistence of global (in time) solutions has been proved providing that $1<p\leqslant 1+2/(n-1)$, for example, Theorem 4.2 in \cite{D'AmbrosioLucente2003}.

We now consider strongly damped wave equation in the exterior domain.  To the best of the authors' knowledge, there exist few results on \eqref{eq1}. We refer to  \cite{Ikehata} for decay estimates to the linearized problem. Additionally, \cite{IkehataInoue} proved GESDS to the two-dimensional semilinear problem with power nonlinearity $f(u,u_t)=|u|^p$ when $p>6$. Recently, the blow-up of solutions to semilinear wave equation with strong damping and power nonlinearity $f(u,u_t)=|u|^p$ has been obtained by \cite{Fino}. Nevertheless, so far the study of semilinear strongly damped wave equations with power nonlinear term of the derivative-type $|u_t|^q$ or the mixed-type $|u|^p+|u_t|^q$ in an exterior domain are still unknown. In this paper, we will give the answer by studying the local (in time) existence of mild solution and nonexistence of global (in time) solutions to \eqref{eq1}. However, the local (in time) existence for strongly damped waves with nonlienarity $f(u,u_t)$ satisfying \eqref{Nonlinearity} is different with the case when $f(u,u_t)=|u|^p$ studied by \cite{Ikehata} or \cite{Fino}. For example, we need the estimate \eqref{eq2.8} below, which was not mentioned before.

Before stating our main results on blow-up, the next proposition on local (in time) well-posedness is needed. For the proof of this proposition, we refer to \cite{Yuta} by an appropriate modification of the energy space and some estimations. For simplicity, we present all in details with all required modifications in Section \ref{sec3}.
\begin{prop}[Local existence of mild solution]\label{prop1} Let us assume the exponents of power nonlinear terms $f(u,u_t)$ satisfies
	\begin{equation}\label{assumption1}
	\begin{cases}
	1<p,q<\infty &\mbox{for}\; n=1,2,\\
	1<p,q\leqslant \frac{n}{n-2}&\mbox{for}\; n\geqslant 3,
	\end{cases}
	\end{equation} 
	and initial data fulfill
	\begin{align}\label{Condition.Initial.Data}
	(u_0,u_1)\in H^1_0(\Omega)\times H^1(\Omega).
	\end{align}
	Then, there exists a maximal existence time $0<T_{\max}\leqslant\infty$ such that there is a uniquely mild solution
	\begin{align*}
	u\in \mathcal{C}\left(\left[0,T_{\max}\right),H^1_0(\Omega)\right)\cap \mathcal{C}^1\left(\left[0,T_{\max}\right), H^1(\Omega)\right)
	\end{align*}
	to \eqref{eq1}.  Furthermore, the following statement holds:
	\begin{equation}\label{alternative}
	\mbox{either}\,\, T_{\max}=\infty \quad\mbox{or else}\quad T_{\max}<\infty \;\mbox{and}\; \|u(t,\cdot)\|_{H^1}+\|u_t(t,\cdot)\|_{H^1}\rightarrow\infty\;\;\mbox{as}\;\; t\rightarrow T_{\max}.
	\end{equation}
\end{prop}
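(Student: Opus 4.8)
The plan is to prove local well-posedness via the Banach fixed-point theorem applied to Duhamel's formula, treating the strongly damped wave operator as the generator of an analytic semigroup on an appropriate energy space.
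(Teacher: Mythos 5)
Your one-sentence plan points in the same general direction as the paper (a contraction argument on the Duhamel/integral formulation), but as it stands it is a strategy label, not a proof, and it leaves unaddressed exactly the points where this problem is delicate. The crux is the phrase ``an appropriate energy space'': because the nonlinearity contains $|u_t|^q$, the standard energy space $H^1_0(\Omega)\times L^2(\Omega)$ on which the strongly damped wave operator generates its (analytic) semigroup does \emph{not} work --- for $u_t$ merely in $L^2(\Omega)$ the composition $|u_t|^q$ need not belong to $L^2(\Omega)$ (nor even $L^1$), so the fixed-point map is not even well defined there. One must instead run the contraction in a space controlling $u_t$ in $H^1(\Omega)$, i.e.\ in $\mathcal{C}([0,T],H^1_0(\Omega))\cap\mathcal{C}^1([0,T],H^1(\Omega))$, so that the Gagliardo--Nirenberg/Sobolev embedding, valid precisely under the exponent restriction \eqref{assumption1}, gives $\||u_t|^q\|_{L^2}=\|u_t\|^q_{L^{2q}}\leqslant C\|u_t\|^q_{H^1}$. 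Your proposal never identifies this space, nor where \eqref{assumption1} enters.

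Once you work in that stronger norm, the real technical burden is a linear estimate the semigroup picture does not hand you for free: for the inhomogeneous linear problem with $F\in\mathcal{C}([0,T],L^2(\Omega))$ you need a bound on $\|\nabla u_t(t,\cdot)\|_{L^2}$ in terms of the data and $F$ (the paper's estimate \eqref{eq2.16}, obtained by testing the equation with $u_{tt}$ and combining with \eqref{eq2.7} and \eqref{eq2.13}). Abstract analytic smoothing ($\|Ae^{tA}\|\lesssim t^{-1}$) degenerates at $t=0$ and does not apply directly since $u_1\in H^1(\Omega)$ is not in the generator's domain and $F$ takes values only in $L^2$; the paper avoids this entirely by constructing the solution operators $R(t)$, $S(t)$ from strong solutions (Proposition \ref{prop2.2}) plus energy estimates and a density argument (Propositions \ref{prop2.3} and \ref{prop2.4}). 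Finally, the statement also asserts the alternative \eqref{alternative}, which requires a continuation argument (a local existence time depending only on the size of the data, so a bounded solution can be extended past $T_{\max}$); your proposal does not mention this step at all. So the gap is not the choice of fixed-point framework, which is indeed the paper's framework, but the absence of every ingredient that makes it close: the correct space, the nonlinear and Lipschitz estimates using \eqref{assumption1}, the $\nabla u_t$ linear estimate, and the continuation argument.
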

\begin{rem}
	In Proposition \ref{prop1}, we say that $u$ is a global (in time) solution of $\eqref{eq1}$ if $T_{\max}=\infty$, while in the case of $T_{\max}<\infty$, we say that the solution $u$ blows up in finite time.
\end{rem}

We now state our main results on blow-up of solutions, whose proofs are based on the test function method (see, for example, \cite{Zhang,Finogeorgiev,FinoKarch,Finokirane,PM1}).
\begin{thm}[Blow-up for nonlinearity of derivative-type]\label{a=0}
	Let us assume $(u_0,u_1)\in \left(H^1_0(\Omega)\cap L^1(\Omega)\right)\times H^1(\Omega)$ and $\phi_0u_1\in L^1(\Omega)$ such that
	\begin{align*}
	\int\nolimits_{\Omega}\phi_0(x)u_1(x)\,\mathrm{d}x>0,
	\end{align*}
	where $\phi_0(x)$ is defined in Lemmas \ref{lemma1}, \ref{lemma4} and \ref{lemma2}.\\
	If the exponent fulfills
	\begin{equation*}
	\begin{cases}
	1<q\leqslant 1+\frac{1}{1+\sqrt{5}}& \mbox{for}\; n=1,\\
	1<q<1+\frac{1}{2}& \mbox{for}\;n=2,\\
	1<q\leqslant 1+\frac{1}{n}& \mbox{for}\;n\geqslant 3,
	\end{cases}
	\end{equation*}
	then the solution to \eqref{eq1} with nonlinear term $f(u,u_t)=|u_t|^q$ blows up in finite time.
\end{thm}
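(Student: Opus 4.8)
The plan is to argue by contradiction: assuming $T_{\max}=\infty$ in Proposition \ref{prop1}, so that a global (in time) mild solution $u$ exists, I will construct a nonnegative test function forcing $\int_\Omega\phi_0 u_1\,\mathrm{d}x\leqslant 0$, contradicting the hypothesis. Following the test function method, I take $\varphi(t,x)=\eta(t)\psi(x)$ with $\psi(x)=\phi_0(x)\chi_R(x)$, where $\phi_0$ is the harmonic corrector of Lemmas \ref{lemma1}, \ref{lemma4} and \ref{lemma2} (satisfying $\Delta\phi_0=0$ in $\Omega$, $\phi_0=0$ on $\partial\Omega$, and $\phi_0\geqslant0$), $\chi_R(x)=\Lambda(|x|/R)^m$ is a smooth spatial cutoff equal to $1$ on $\{|x|\leqslant R\}$ and supported in $\{|x|\leqslant 2R\}$ (with $m$ large enough to make the Young corrections below integrable), and $\eta(t)=\Theta(t/T)^m$ is a time cutoff with $\eta(0)=1$ supported in $[0,T]$. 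Multiplying the equation in \eqref{eq1} with $f(u,u_t)=|u_t|^q$ by $\varphi$, integrating over $(0,\infty)\times\Omega$, integrating by parts once in time (using $\eta(0)=1$) and applying Green's formula in space (all boundary contributions vanish since $u=0$ and $\psi=0$ on $\partial\Omega$), I obtain the fundamental identity
\begin{equation*}
\int_0^\infty\!\!\int_\Omega|u_t|^q\eta\psi\,\mathrm{d}x\,\mathrm{d}t+\int_\Omega\psi u_1\,\mathrm{d}x=-\int_0^\infty\!\!\int_\Omega u_t\,\eta'\psi\,\mathrm{d}x\,\mathrm{d}t-\int_0^\infty\!\!\int_\Omega u\,\eta\,\Delta\psi\,\mathrm{d}x\,\mathrm{d}t-\int_0^\infty\!\!\int_\Omega u_t\,\eta\,\Delta\psi\,\mathrm{d}x\,\mathrm{d}t,
\end{equation*}
where $\Delta\psi=2\nabla\phi_0\cdot\nabla\chi_R+\phi_0\Delta\chi_R$ is supported in the annulus $\{R\leqslant|x|\leqslant 2R\}$ because $\Delta\phi_0=0$.

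Next I estimate the three terms on the right, writing $I:=\int\int|u_t|^q\eta\psi$ and $q'=q/(q-1)$. The first and third terms contain $u_t$ and are handled by Young's inequality with exponents $q,q'$, producing a controllable fraction of $I$ plus the correction integrals $\int\int\psi|\eta'|^{q'}\eta^{-q'/q}$ and $\int\int|\Delta\psi|^{q'}\psi^{-q'/q}\eta$. The genuinely delicate term is the middle one, $-\int\int u\,\eta\,\Delta\psi$, which involves $u$ rather than $u_t$ and hence cannot be absorbed directly. Here I use the representation $u(t,x)=u_0(x)+\int_0^t u_s(x)\,\mathrm{d}s$: the $u_0$ part is bounded by $\|u_0\|_{L^1}$ times the annular size of $\Delta\psi$, while the $\int_0^t u_s\,\mathrm{d}s$ part is estimated by H\"older's inequality in $s$ and in $x$ over the annulus and then by Young's inequality, converting it back into a fraction of $I$ plus a correction integral. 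Collecting everything and absorbing the $I$-contributions to the left, I arrive at an inequality of the form $\int_\Omega\phi_0\chi_R u_1\,\mathrm{d}x\leqslant C\,(\text{sum of correction integrals in }R,T)$.

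I then insert the scaling $T=R^\sigma$ and evaluate the correction integrals as powers of $R$: for $n\geqslant3$, where $\phi_0$ is bounded, one finds the exponents $n-\sigma(q'-1)$, $\sigma(q'+1)+n-2q'$, $\sigma-2$ and $n-2q'+\sigma$. Requiring all of them to be negative for some admissible $\sigma>0$ reduces, after cross-multiplying the two binding constraints (a lower and an upper bound on $\sigma$), to the single condition $q'>n+1$, that is $q<1+\tfrac1n$. Letting $R\to\infty$, and hence $\chi_R\uparrow1$, the left-hand side tends to $\int_\Omega\phi_0 u_1\,\mathrm{d}x>0$ by dominated convergence (using $\phi_0 u_1\in L^1$), while the right-hand side tends to $0$, giving the contradiction. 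The borderline exponent $q=1+\tfrac1n$ is recovered by the usual refinement: the subcritical estimate first yields $\int_0^\infty\int_\Omega|u_t|^q\phi_0<\infty$, and the annular tail $\int_{\{R\leqslant|x|\leqslant2R\}}\int_0^T|u_s|^q\to0$ as $R\to\infty$ then upgrades the borderline ($R^0$) correction terms to $o(1)$.

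Finally, the low-dimensional cases require the dimension-specific correctors of Lemmas \ref{lemma1} and \ref{lemma4}: for $n=2$ the corrector grows like $\log|x|$ and for $n=1$ like $|x|$, which alters both the size of $\Delta\psi$ on the annulus and the spatial integrals $\int\psi$. For $n=2$ the logarithmic factors prevent closing the critical case, leaving the strict inequality $q<1+\tfrac12$, whereas for $n=1$ the linear growth shifts the exponents so that the compatibility condition on $\sigma$ becomes quadratic in $q$, whose relevant root produces the anomalous threshold $1+\tfrac{1}{1+\sqrt5}$. I expect the main obstacle to be precisely the treatment of the $u$-term $-\int\int u\,\eta\,\Delta\psi$: converting an integral of $u$ over the annulus back into a controllable fraction of $I$ through the time-integral representation and H\"older's inequality, while simultaneously locating one scaling exponent $\sigma$ that renders every resulting correction negative, is what pins down the exponent $1+\tfrac1n$ and, through the modified correctors, the one-dimensional threshold.
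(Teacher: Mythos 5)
Your proposal is correct and follows essentially the same route as the paper: the same test function $\phi_0(x)\chi_R(x)\eta(t)$ built from the harmonic corrector, the same conversion of the troublesome $u$-term into a $u_t$-term plus a $u_0$-boundary term (your fundamental-theorem-plus-Fubini step is exactly the paper's integration by parts against $\Psi_T(t)=\int_t^\infty\eta_T^k(\tau)\,\mathrm{d}\tau$), the same Young/H\"older absorption with the critical-case refinement, and the same scaling analysis --- your free parameter $\sigma$ in $T=R^\sigma$ merely generalizes the paper's fixed choices $T=R$ (for $n\geqslant 2$) and $R=T^{\alpha}$ with $\alpha=(1+\sqrt{5})/2$ (for $n=1$, i.e.\ $\sigma=1/\alpha$). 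The only caveat is bookkeeping in the case $n=1$: if the weight $\psi^{-q'/q}\sim R^{-(q'-1)}$ on the annulus is kept exactly as you wrote it, the compatibility condition comes out as $q'>3$ (i.e.\ $q<3/2$), which is stronger than needed, whereas the stated threshold $q'>2+\sqrt{5}$, i.e.\ $q\leqslant 1+\tfrac{1}{1+\sqrt{5}}$, emerges only if, like the paper, one replaces $\phi_0^{-q'/q}$ by a constant there --- either way the theorem follows.
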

\begin{thm}[Blow-up for nonlinearity of mixed-type]\label{a>0} Let us assume $(u_0,u_1)\in \left(H^1_0(\Omega)\cap L^1(\Omega)\right)\times H^1(\Omega)$ and $\phi_0u_1\in L^1(\Omega)$ such that
	\begin{align*}
	\int\nolimits_{\Omega}\phi_0(x)u_1(x)\,\mathrm{d}x>0,
	\end{align*}
	where $\phi_0(x)$ is defined in Lemmas \ref{lemma1}, \ref{lemma4} and \ref{lemma2}.\\
	In the case when $n=1$, if one of the following condition is fulfilled:
	\begin{align*}
	\begin{cases}
	1<p\leqslant 1+\alpha_1\approx 2.28 \quad\hbox{and}\quad 1<q,\\
	1<p\leqslant \frac{2\alpha+1}{2\alpha-1}\quad\hbox{and}\quad 1<q\leqslant \frac{\alpha+1}{2}\quad\hbox{for any}\,\alpha<\alpha_1,\\
	1<p\leqslant \alpha+1\quad\hbox{and}\quad 1<q\leqslant \frac{2\alpha+1}{2\alpha}\quad\hbox{for any}\,\alpha_1<\alpha<\alpha_2,\\
	1<p \quad\hbox{and }\quad 1<q\leqslant \frac{1+\alpha_2}{2}\approx 1.3,
	\end{cases}
	\end{align*}
	where $\alpha_1=(1+\sqrt{17}\,)/4$ is the positive root of $2\alpha^2-\alpha-2=0$, and $\alpha_2=(1+\sqrt{5}\,)/2$ is the positive root of $\alpha^2-\alpha-1=0$; in the case when $n\geqslant 2$, if one of the following condition is fulfilled:
	\begin{align*}
	\begin{cases}
	1<p<3\quad\mbox{or}\quad 1<q< 1+\frac{1}{2}& \mbox{for}\;n=2,\\
	1<p\leqslant 1+\frac{2}{n-1}\quad\mbox{or}\quad 1<q\leqslant 1+\frac{1}{n}& \mbox{for}\;n\geqslant 3,
	\end{cases}
	\end{align*}	
	then the solution to \eqref{eq1} with nonlinear term $f(u,u_t)=|u|^p+|u_t|^q$ blows up in finite time.
\end{thm}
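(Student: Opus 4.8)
\emph{Strategy.} I would argue by contradiction through the test function method (nonlinear capacity estimates), assuming $T_{\max}=\infty$ in \eqref{alternative} and producing a contradiction with the sign condition $\int_\Omega\phi_0 u_1\,\mathrm dx>0$. The engine is a single space--time test function of product form
\[
\varphi_R(t,x)=\phi_0(x)\,\eta\!\left(\tfrac{|x|}{R}\right)^{\ell}\psi\!\left(\tfrac{t}{R}\right)^{m},
\]
where $\phi_0$ is the dimension-adapted weight furnished by Lemmas \ref{lemma1}, \ref{lemma4} and \ref{lemma2} (vanishing on $\partial\Omega$ and harmonic, resp.\ logarithmic/linear, in $\Omega$ according to $n\ge3$, $n=2$, $n=1$), and $\eta,\psi$ are smooth cutoffs equal to $1$ near the origin, supported in the unit ball/interval, raised to exponents $\ell,m$ large enough to render every weight below locally integrable. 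The wave-type scaling $t\sim R$ (rather than the parabolic $t\sim R^2$) is forced by the low-frequency part of the symbol of $\partial_t^2-\Delta-\Delta\partial_t$ and is exactly what turns the two error exponents into the thresholds $1+2/(n-1)$ and $1+1/n$.

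\emph{Weak identity and the two absorption schemes.} Insert $\varphi_R$ into the definition of mild/weak solution of \eqref{eq1} and integrate by parts. Because $u=\phi_0=0$ on $\partial\Omega$ all lateral boundary terms vanish, and because $\psi$ is compactly supported in time the terminal contributions vanish; what survives is a space integral of the initial data against $\varphi_R(0,\cdot)$, $\partial_t\varphi_R(0,\cdot)$, $\Delta\varphi_R(0,\cdot)$, plus a space--time integral of $u$ (resp.\ $u_t$) against derivatives of $\varphi_R$. I would exploit this in two complementary ways. Moving every derivative onto the test function yields $\int\!\!\int(|u|^p+|u_t|^q)\varphi_R=\int\!\!\int u\,(\partial_t^2-\Delta+\Delta\partial_t)\varphi_R+I_0$; dropping the nonnegative $|u_t|^q$ term and estimating the right-hand integral by Young's inequality with exponents $(p,p')$ absorbs it into $\tfrac12\int\!\!\int|u|^p\varphi_R$ and isolates the $|u|^p$ blow-up, \emph{independently of $q$}. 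Keeping instead one time derivative produces $\int\!\!\int(|u|^p+|u_t|^q)\varphi_R+\int u_1\varphi_R(0)=-\int\!\!\int u_t(\partial_t\varphi_R+\Delta\varphi_R)-\int\!\!\int u\,\Delta\varphi_R$; here Young with $(q,q')$ absorbs the $u_t$ terms, while the leftover $\int\!\!\int u\,\Delta\varphi_R$—supported only on the cutoff annulus $|x|\sim R$ thanks to $\Delta\phi_0=0$—is turned into a $u_t$ contribution by writing $u=u_0+\int_0^t u_s\,\mathrm ds$, after which it too is absorbed via $(q,q')$ using $u_0\in L^1$. This isolates the $|u_t|^q$ blow-up \emph{independently of $p$}, and explains the disjunctive ``or'' in the hypotheses for $n\ge2$.

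\emph{Error exponents and the critical cases.} With $\phi_0\sim1$ for $n\ge3$ one has $\int_{|x|<R}\phi_0\sim R^{n}$, and a direct scaling count gives error integrals $\mathcal E_p(R)\sim R^{\,n+1-2p'}$ and $\mathcal E_q(R)\sim R^{\,n+1-q'}$. These stay bounded precisely when $p\le 1+2/(n-1)$ and $q\le 1+1/n$, matching the statement; for $n=2$ the weight grows logarithmically, the exponent $1+2/(n-1)$ degenerates to $3$, and the borderline is lost, forcing the strict inequalities $p<3$, $q<1+\tfrac12$. In the strictly subcritical range $\mathcal E\to0$ gives $\int_\Omega\phi_0 u_1\le o(1)\le0$ at once; in the critical range $\mathcal E=O(1)$ only yields $\int_0^\infty\!\!\int_\Omega(|u|^p+|u_t|^q)\varphi<\infty$, and one closes the argument by the standard refinement: finiteness of this total integral forces the part of $\mathcal E(R)$ living on the annulus to tend to $0$ by dominated convergence, and rerunning the estimate gives the contradiction $\int_\Omega\phi_0 u_1\le0$.

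\emph{The one-dimensional case and the main obstacle.} For $n=1$ the clean separation above breaks down: the linear harmonic weight grows, $\int_{|x|<R}\phi_0\sim R^2$, and the two schemes no longer decouple cleanly, so I would instead use a test function in which the spatial and temporal scales—and the growth of the weight—carry a free parameter $\alpha$, producing $\alpha$-dependent thresholds of the shape $p\le(2\alpha+1)/(2\alpha-1)$, $p\le\alpha+1$ and $q\le(\alpha+1)/2$, $q\le(2\alpha+1)/(2\alpha)$ seen in the statement. Blow-up then holds whenever \emph{some} admissible $\alpha$ makes both the $p$- and the $q$-error bounded, and the piecewise region is exactly the union over $\alpha\in(1,\alpha_2)$ of these constraints; the regime boundaries occur where the binding constraint switches, namely at the roots $\alpha_1=(1+\sqrt{17})/4$ of $2\alpha^2-\alpha-2=0$ and $\alpha_2=(1+\sqrt5)/2$ of $\alpha^2-\alpha-1=0$. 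I expect the genuine difficulty to be exactly this one-dimensional optimization—choosing the weight growth and the space/time scaling to maximize the admissible $(p,q)$ set, and handling the critical equalities—together with the careful verification that the exterior-domain weights $\phi_0$ of Lemmas \ref{lemma1}, \ref{lemma4}, \ref{lemma2} make all boundary terms vanish and all error weights integrable; the $n\ge2$ estimates, by contrast, are comparatively routine once the scaling $t\sim R$ is fixed.
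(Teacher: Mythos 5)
Your proposal is correct and follows essentially the same route as the paper: your two ``absorption schemes'' are exactly the paper's Case 1 (all integrations by parts leaving $u$, absorbed into $\varepsilon I$ via Young with $(p,p')$) and Case 5 (all leaving $u_t$, absorbed into $\varepsilon J$ via $(q,q')$, with your $u=u_0+\int_0^t u_s\,\mathrm{d}s$ device being the paper's auxiliary function $\Psi_T(t)=\int_t^\infty\eta_T^k(\tau)\,\mathrm{d}\tau$ in disguise), yielding the disjunctive conditions for $n\geqslant 2$ with the logarithmic loss at $n=2$. Your treatment of $n=1$ --- anisotropic scaling $x\sim T^\alpha$, $t\sim T$ with a free parameter $\alpha$ and mixed absorptions whose binding constraints switch at the roots $\alpha_1$, $\alpha_2$ --- is likewise the paper's argument (its Cases 1, 2, 4, 5 in the $n=1$ table), so there is nothing substantive to add.
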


The remaining part of paper is organized as follows. In Section \ref{sec2}, we introduce several preliminaries including well-posedness of the corresponding linearized inhomogeneous problem to \eqref{eq1} and some properties of harmonic functions to be used later. In Section \ref{sec3}, we prove local (in time) existence of mild solution (cf. Proposition \ref{prop1}). Section \ref{sec4} contains the proofs of the blow-up results (cf. Theorems \ref{a=0} and \ref{a>0}).

\medskip
\noindent\textbf{Notation.} We now give some notations to be used in this paper. We denote by$\|\cdot\|_{H^1_0}=\|\cdot\|_{H^1}=\|\cdot\|_{L^2}+\|\nabla \cdot\|_{L^2}$ the usual $H^1_0(\Omega)$-norm.  Moreover, we denote denote by $W^{1,r}$ with $1<r<\infty$ Bessel potential spaces.


\section{Preliminaries}\label{sec2}
In this section, we give some preliminary properties that will be used in the proof of local (in time) existence of mild solution and blow-up results in the remaining sections.
\subsection{Linear homogeneous strongly damped wave equation}
Let us consider the following linear homogeneous wave equation with strong damping:
\begin{equation}\label{eq2.1}
\begin{cases}
u_{tt}-\Delta u -\Delta u_t =0, &x\in {\Omega},\,t>0,\\
u(0,x)= u_0(x),\;u_t(0,x)=  u_1(x),&x\in {\Omega},\\
u=0,&x\in {\partial\Omega},\,t>0,
\end{cases}
\end{equation}
which is the corresponding linearized model to \eqref{eq1}.

To begin with, we give the definition of a strong solution to \eqref{eq2.1}. 
\begin{defn}[Strong solution] Let $(u_0,u_1)\in \left(H^2(\Omega)\cap H^1_0(\Omega)\right)^2$. A function $u$ is said to be a strong solution to \eqref{eq2.1} if
	\begin{align*}
	u\in \mathcal{C}^{1}\left([0,\infty),H^2(\Omega)\cap H^1_0(\Omega)\right)\cap \mathcal{C}^{2}\left([0,\infty),L^2(\Omega)\right)
	\end{align*}
	and $u$ has initial data $u(0,x)=u_0(x)$, $u_t(0,x)=u_1(x)$ and satisfies
	the equation in \eqref{eq2.1} in the sense of $L^2(\Omega)$.
\end{defn}
\begin{prop}\label{prop2.1}
	For each $(u_0,u_1)\in \left(H^2(\Omega)\cap H^1_0(\Omega)\right)^2$, there exists a unique strong solution $u$ to \eqref{eq2.1} that satisfies the following energy estimates:
	\begin{align}
	\|(u_t,\nabla u)(t,\cdot)\|_{L^2}&\leqslant \|(u_1,\nabla u_0)\|_{L^2\times L^2} \quad \mbox{for any}\;t\geqslant 0,\label{eq2.2}
	\end{align}
	\begin{align}
	\|u(t,\cdot)\|_{L^2}&\leqslant \|u_0\|_{L^2}+T\|(u_1,\nabla
	u_0)\|_{L^2\times
		L^2}\quad \mbox{for any $T>0$ and all $0\leqslant t\leqslant T$}.\label{eq2.3}
	\end{align}
\end{prop}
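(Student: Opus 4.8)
The plan is to realize \eqref{eq2.1} as an abstract Cauchy problem on an energy space and generate the solution by a contraction semigroup, after which the two estimates follow almost for free. Let $A=-\Delta$ denote the Dirichlet Laplacian on $L^2(\Omega)$, a nonnegative self-adjoint operator with $D(A)=H^2(\Omega)\cap H^1_0(\Omega)$, and put $U=(u,u_t)^{\top}$. I would work on $\mathcal{H}=H^1_0(\Omega)\times L^2(\Omega)$ endowed with the energy norm $\|(u,v)\|_{\mathcal{H}}^2=\|\nabla u\|_{L^2}^2+\|v\|_{L^2}^2$ and set
\begin{align*}
\mathcal{A}=\begin{pmatrix}0 & I\\ \Delta & \Delta\end{pmatrix},\qquad D(\mathcal{A})=\bigl\{(u,v)\in H^1_0(\Omega)\times H^1_0(\Omega):\ u+v\in H^2(\Omega)\cap H^1_0(\Omega)\bigr\},
\end{align*}
the domain being chosen exactly so that $\mathcal{A}(u,v)=(v,\Delta(u+v))\in\mathcal{H}$.

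First I would verify that $\mathcal{A}$ is dissipative: integrating by parts with $v\in H^1_0(\Omega)$ gives $\langle\mathcal{A}(u,v),(u,v)\rangle_{\mathcal{H}}=\int_{\Omega}\nabla v\cdot\nabla u\,\mathrm{d}x+\int_{\Omega}\Delta(u+v)\,v\,\mathrm{d}x=-\|\nabla v\|_{L^2}^2\leqslant 0$. Next I would check the range condition of the Lumer--Phillips theorem by solving $(I-\mathcal{A})(u,v)=(f,g)$ for every $(f,g)\in\mathcal{H}$: eliminating $v=u-f$ reduces this to the elliptic problem $(I+2A)u=g+f+Af$, whose right-hand side lies in $H^{-1}(\Omega)$, so coercivity of $I+2A$ on $H^1_0(\Omega)$ together with the Lax--Milgram theorem yields a unique $u\in H^1_0(\Omega)$; since then $A(u+v)=A(2u-f)=g+f-u\in L^2(\Omega)$, one gets $(u,v)\in D(\mathcal{A})$. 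Hence $\mathcal{A}$ generates a $C_0$ contraction semigroup $(S(t))_{t\geqslant 0}$ on $\mathcal{H}$, and for data $(u_0,u_1)\in(H^2(\Omega)\cap H^1_0(\Omega))^2\subset D(\mathcal{A})$ the first component of $U(t)=S(t)(u_0,u_1)$ is the candidate solution, with uniqueness inherited from the semigroup.

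The energy estimates are then the easy part, and I would obtain \eqref{eq2.2} by the multiplier method, which coincides with the contraction property of $S(t)$. Testing the equation with $u_t$ and using the boundary condition to discard the boundary terms yields
\begin{align*}
\frac{1}{2}\frac{\mathrm{d}}{\mathrm{d}t}\bigl(\|u_t\|_{L^2}^2+\|\nabla u\|_{L^2}^2\bigr)=-\|\nabla u_t\|_{L^2}^2\leqslant 0,
\end{align*}
so the energy $\|u_t\|_{L^2}^2+\|\nabla u\|_{L^2}^2$ is nonincreasing, which is exactly \eqref{eq2.2}. For \eqref{eq2.3} I would write $u(t)=u_0+\int_0^t u_t(s)\,\mathrm{d}s$, whence $\|u(t)\|_{L^2}\leqslant\|u_0\|_{L^2}+\int_0^t\|u_t(s)\|_{L^2}\,\mathrm{d}s$, and bound $\|u_t(s)\|_{L^2}\leqslant\|(u_1,\nabla u_0)\|_{L^2\times L^2}$ by \eqref{eq2.2} before integrating over $[0,t]\subset[0,T]$.

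The main obstacle is not these estimates but the functional-analytic setup on an \emph{exterior} domain, where Poincar\'e's inequality may fail and $A$ need not have a spectral gap, so that $\|\nabla\cdot\|_{L^2}$ has to be justified as a genuine norm on $H^1_0(\Omega)$ (delicate for $n=1,2$) for $\mathcal{H}$ to be a Hilbert space and $S(t)$ a true contraction; in the low-dimensional cases one instead generates the semigroup with respect to the full $H^1_0$-norm, under which $\mathcal{A}$ is only quasi-dissipative, and recovers \eqref{eq2.2} from the multiplier identity, which is valid for any strong solution. The subtlest point is reconciling the semigroup regularity with the definition of a strong solution: membership $U(t)\in D(\mathcal{A})$ only controls $u+u_t$ in $H^2(\Omega)$, whereas the definition demands $u,u_t\in H^2(\Omega)\cap H^1_0(\Omega)$ \emph{separately}. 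I would recover this from the known analyticity of the strongly damped wave semigroup, which gives $U(t)\in D(\mathcal{A}^k)$ for $t>0$ and, combined with the $H^2$-regularity of the data at $t=0$, upgrades $u$ to $\mathcal{C}^1([0,\infty),H^2(\Omega)\cap H^1_0(\Omega))\cap\mathcal{C}^2([0,\infty),L^2(\Omega))$; alternatively, a direct functional-calculus argument solves the scalar equation $\widehat u''+\lambda\widehat u'+\lambda\widehat u=0$ explicitly and integrates the resulting bound $\lambda^2|\widehat u(t,\lambda)|^2\lesssim\lambda^2|\widehat u_0|^2+|\widehat u_0+\widehat u_1|^2$ against the spectral measure of $A$.
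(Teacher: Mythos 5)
Your proposal is correct in substance but takes a genuinely different, self-contained route from the paper. The paper's own proof is purely citation-based: existence of the strong solution is quoted from Theorem 3.1 of \cite{KPS}, and the energy estimates \eqref{eq2.2}--\eqref{eq2.3} are quoted from Proposition 1.1 of \cite{Ikehata} or Proposition 2 of \cite{Fino}. You instead rebuild the linear theory from scratch: Lumer--Phillips on the energy space for existence and uniqueness, the multiplier identity $\tfrac12\tfrac{\mathrm{d}}{\mathrm{d}t}\bigl(\|u_t\|_{L^2}^2+\|\nabla u\|_{L^2}^2\bigr)=-\|\nabla u_t\|_{L^2}^2$ for \eqref{eq2.2}, and $u(t)=u_0+\int_0^t u_t(s)\,\mathrm{d}s$ together with \eqref{eq2.2} for \eqref{eq2.3}; this last step is in fact the same device the paper itself uses later for the inhomogeneous analogue \eqref{eq2.8}. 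What your approach buys is independence from the cited literature and a sharp constant in \eqref{eq2.2} (the multiplier identity gives monotonicity of the quadratic energy, hence constant one). What it costs is exactly the two technicalities you flag. The first (failure of completeness of $H^1_0(\Omega)$ under $\|\nabla\cdot\|_{L^2}$ on an unbounded domain) is adequately handled by your quasi-dissipativity workaround in the full $H^1_0\times L^2$ norm, since the sharp estimate is recovered afterwards from the multiplier identity, which holds for any strong solution. The second is the genuine crux: $D(\mathcal{A})$ only controls $u+u_t$ in $H^2(\Omega)$, while the paper's definition of strong solution demands $u\in\mathcal{C}^1\left([0,\infty),H^2(\Omega)\cap H^1_0(\Omega)\right)\cap\mathcal{C}^2\left([0,\infty),L^2(\Omega)\right)$, i.e.\ $u$ and $u_t$ separately $H^2$-regular --- this is precisely what the KPS theorem supplies and what the semigroup construction does not give for free. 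Of your two proposed repairs, the spectral-calculus one is the reliable one: the Dirichlet Laplacian is self-adjoint, the scalar problem $\widehat u''+\lambda\widehat u'+\lambda\widehat u=0$ has uniformly stable roots, and data in $D(A)^2$ propagate to $u(t),u_t(t)\in D(A)$ with the required continuity up to $t=0$. Analyticity of the semigroup alone is not enough, since the bound $\|\mathcal{A}U(t)\|\lesssim t^{-1}\|U_0\|$ degenerates as $t\to0^+$ and does not by itself yield regularity at $t=0$. With the spectral argument carried out, your proof is a complete and more self-contained alternative to the paper's.
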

\begin{proof}
	The existence of the strong solution is done by Theorem 3.1 in \cite{KPS}. The energy estimates \eqref{eq2.2} and \eqref{eq2.3} can be deduced easily from Proposition 1.1 in\cite{Ikehata} or directly from Proposition 2 in \cite{Fino}.
\end{proof}

Let us denote the operator $R(t)$ such that
\begin{align*}
R(t):\,(u_0,u_1)\in
\left(H^2(\Omega)\cap H^1_0(\Omega)\right)^2\,\longrightarrow\,u(t,\cdot)\in
H^2(\Omega)\cap H^1_0(\Omega)
\end{align*}
at the time $t \geqslant 0$. In other words, the solution $u$ of \eqref{eq2.1}
can be given by $u(t,x)=R(t)(u_0,u_1)(x)$. 
\begin{rem}\label{rmk2.1}
	From Proposition \ref{prop2.1}, the operator $R(t)$ can be
	extended uniquely such that 
	\begin{align}\label{Extend.Operator}
	R(t):\,H_0^1(\Omega)\times
	L^2(\Omega)\,\longrightarrow\, \mathcal{C}\left([0,\infty),H_0^1(\Omega)\right)\cap
	\mathcal{C}^1\left([0,\infty),L^2(\Omega)\right).
	\end{align}
	Indeed, for any fixed $T>0$, due to the energy estimates \eqref{eq2.2} and \eqref{eq2.3}, the following estimate:
	\begin{align*}
	\|R(t)(u_0,u_1)(\cdot)\|_{H_0^1}+\|\partial_t(R(t)(u_0,u_1))(\cdot)\|_{L^2}\leqslant
	C(1+T)\,\|(u_0,u_1)\|_{H_0^1\times L^2}
	\end{align*}
	holds for all $0\leqslant t\leqslant T$. It follows that the operator $R(t)$ can be extended uniquely to an operator such as \eqref{Extend.Operator}. Since $T$ is arbitrary, we conclude the desired extension.
\end{rem}

\subsection{Linear inhomogeneous strongly damped wave equation}
Let us now consider the linear inhomogeneous wave equation with strong damping, namely,
\begin{equation}\label{eq2.4}
\begin{cases}
u_{tt}-\Delta
u -\Delta u_t =F(t,x), &x\in {\Omega},\,t>0,\\
u(0,x)=  u_0(x),\;u_t(0,x)=  u_1(x),&x\in {\Omega},\\
u=0,&x\in {\partial\Omega},\,t>0.
\end{cases}
\end{equation}

At this time, the definition of a strong solution to \eqref{eq2.4} can be shown by the following.
\begin{defn}[Strong solution] Let  $(u_0,u_1)\in \left(H^2(\Omega)\cap H^1_0(\Omega)\right)^2$ and $F\in \mathcal{C}\left([0,\infty),L^2(\Omega)\right)$. A function $u$ is said to be a strong solution of \eqref{eq2.4} if
	\begin{align*}
	u\in \mathcal{C}^{1}\left([0,\infty),H^2(\Omega)\cap H^1_0(\Omega)\right)\cap \mathcal{C}^{2}\left([0,\infty),L^2(\Omega)\right),
	\end{align*}
	and $u$ has initial data $u(0,x)=u_0(x)$, $u_t(0,x)=u_1(x)$ and satisfies
	the equation in \eqref{eq2.4} in the sense of $L^2(\Omega)$.
\end{defn}
\begin{prop}[Theorem 3.1 in \cite{KPS}]\label{prop2.2} Let $(u_0,u_1)\in \left(H^2(\Omega)\cap H^1_0(\Omega)\right)^2$ and $F\in \mathcal{C}^1\left([0,\infty),L^2(\Omega)\right)$. Then, there exists a unique strong solution to \eqref{eq2.4}.
\end{prop}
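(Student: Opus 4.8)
The plan is to recast the inhomogeneous problem \eqref{eq2.4} as an abstract first-order Cauchy problem and solve it by Duhamel's principle, using the hypothesis $F\in\mathcal{C}^1$ to upgrade the resulting mild solution to a strong one. Writing $A=-\Delta$ for the Dirichlet Laplacian on $\Omega$ (a nonnegative self-adjoint operator with domain $H^2(\Omega)\cap H^1_0(\Omega)$), the equation in \eqref{eq2.4} reads $u_{tt}+Au+Au_t=F$. Setting $U(t)=(u(t,\cdot),u_t(t,\cdot))^{\mathsf T}$ and
$$
\mathcal{A}=\begin{pmatrix}0 & I\\ -A & -A\end{pmatrix},\qquad \mathcal{F}(t)=\begin{pmatrix}0\\ F(t,\cdot)\end{pmatrix},
$$
the problem \eqref{eq2.4} becomes the abstract equation $U'(t)=\mathcal{A}U(t)+\mathcal{F}(t)$ with $U(0)=(u_0,u_1)^{\mathsf T}$. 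Proposition \ref{prop2.1} together with Remark \ref{rmk2.1} shows that the homogeneous problem \eqref{eq2.1} is governed by a strongly continuous semigroup $\{S(t)\}_{t\geqslant 0}$ on the energy space, with $S(t)(u_0,u_1)=\bigl(R(t)(u_0,u_1),\,\partial_t R(t)(u_0,u_1)\bigr)$; the energy estimates \eqref{eq2.2}--\eqref{eq2.3} provide the uniform bounds that identify $\mathcal{A}$ as its infinitesimal generator. (Alternatively, one may diagonalize $A$ through its spectral measure and solve, for each spectral parameter $\lambda\geqslant 0$, the scalar ODE $\hat u''+\lambda\hat u+\lambda\hat u'=\hat F$ explicitly via the characteristic roots $\mu_\pm(\lambda)=\tfrac12\bigl(-\lambda\pm\sqrt{\lambda^2-4\lambda}\bigr)$; this route is well adapted to the exterior domain $\Omega$, whose Dirichlet Laplacian need not have a discrete spectrum.)

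The candidate solution is then furnished by Duhamel's formula
$$
U(t)=S(t)(u_0,u_1)+\int_0^t S(t-s)\mathcal{F}(s)\,\mathrm{d}s.
$$
The first term is a strong solution of the homogeneous problem by Proposition \ref{prop2.1}, since $(u_0,u_1)\in\bigl(H^2(\Omega)\cap H^1_0(\Omega)\bigr)^2$ lies in the generator's domain. The point where $F\in\mathcal{C}^1$ is essential is the differentiability of the Duhamel integral: mere boundedness of $\mathcal{F}$ does not control $\int_0^t\mathcal{A}S(t-s)\mathcal{F}(s)\,\mathrm{d}s$, but exploiting $\mathcal{A}S(t-s)=-\tfrac{\mathrm d}{\mathrm ds}S(t-s)$ and integrating by parts in $s$ gives
$$
\frac{\mathrm d}{\mathrm dt}\int_0^t S(t-s)\mathcal{F}(s)\,\mathrm{d}s=S(t)\mathcal{F}(0)+\int_0^t S(t-s)\mathcal{F}'(s)\,\mathrm{d}s,
$$
whose right-hand side is continuous because $\mathcal{F}'\in\mathcal{C}([0,\infty),L^2)$. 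Consequently $U$ is continuously differentiable in time, and rearranging the abstract equation as $\mathcal{A}U(t)=U'(t)-\mathcal{F}(t)$ shows that $U(t)$ stays in the domain of $\mathcal{A}$, so that $U$ solves the equation in the sense of $L^2(\Omega)$. Unpacking components, this is exactly the regularity $u\in\mathcal{C}^1\bigl([0,\infty),H^2(\Omega)\cap H^1_0(\Omega)\bigr)\cap\mathcal{C}^2\bigl([0,\infty),L^2(\Omega)\bigr)$ required by the definition of a strong solution.

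Uniqueness is immediate from linearity: the difference of two strong solutions with the same data and forcing is a strong solution of the homogeneous problem \eqref{eq2.1} with zero data, hence vanishes by the uniqueness part of Proposition \ref{prop2.1} (or directly from the energy estimate \eqref{eq2.2}). The step I expect to demand the most care is not the Duhamel computation itself but the bookkeeping of the functional-analytic framework: one must choose the base space and pin down the domain of $\mathcal{A}$ so that the abstract strong solution carries \emph{precisely} the regularity in the definition, in particular that both $u(t)$ and $u_t(t)$ lie in $H^2(\Omega)\cap H^1_0(\Omega)$ rather than merely their sum. This relies on elliptic regularity for the Dirichlet Laplacian on the exterior domain $\Omega$, on the analytic-smoothing properties of the strongly damped wave semigroup, and on checking that $(u_0,u_1)\in\bigl(H^2(\Omega)\cap H^1_0(\Omega)\bigr)^2$ places the data in the appropriate domain. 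This is precisely the content supplied by Theorem 3.1 in \cite{KPS}, to which we appeal.
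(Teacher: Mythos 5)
The paper itself offers no proof of this proposition: it is quoted as Theorem 3.1 of \cite{KPS}, so any self-contained argument is automatically "a different route". Your semigroup outline is the natural one, and most of it is sound: Duhamel's formula, the integration by parts that uses $F\in\mathcal{C}^1\left([0,\infty),L^2(\Omega)\right)$, and uniqueness via the energy estimate \eqref{eq2.2} are all correct. The genuine gap sits exactly at the point you flagged and then discharged by "appealing to Theorem 3.1 in \cite{KPS}": that appeal is circular, because the statement you are asked to prove \emph{is} Theorem 3.1 in \cite{KPS}. Concretely, on the energy space $X=H^1_0(\Omega)\times L^2(\Omega)$ the generator's domain is $D(\mathcal{A})=\left\{(u,v)\in H^1_0(\Omega)\times H^1_0(\Omega):\,u+v\in H^2(\Omega)\right\}$, which strictly contains $\left(H^2(\Omega)\cap H^1_0(\Omega)\right)^2$ (take $u\in H^1_0\setminus H^2$ and $v=\phi-u$ with $\phi\in H^2\cap H^1_0$). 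Hence the abstract theorem "$U_0\in D(\mathcal{A})$ and $\mathcal{F}\in\mathcal{C}^1$ imply the mild solution is classical" only yields $u(t,\cdot)+u_t(t,\cdot)\in H^2(\Omega)$, not the separate memberships $u(t,\cdot),u_t(t,\cdot)\in H^2(\Omega)\cap H^1_0(\Omega)$ that the paper's definition of strong solution demands. Analytic smoothing cannot rescue this either: it gives nothing at $t=0$ and so cannot produce continuity into $H^2$ up to $t=0$.

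The gap is repairable without invoking \cite{KPS}, and the repair is short. Once the abstract argument gives $U\in\mathcal{C}\left([0,\infty),D(\mathcal{A})\right)$, set $w:=u+u_t$; elliptic regularity for the Dirichlet Laplacian on $\Omega$ gives $w\in\mathcal{C}\left([0,\infty),H^2(\Omega)\cap H^1_0(\Omega)\right)$. Then $u$ solves the scalar ODE $u_t+u=w$, so
\begin{align*}
u(t,\cdot)=e^{-t}u_0+\int\nolimits_0^t e^{-(t-s)}w(s,\cdot)\,\mathrm{d}s,
\end{align*}
whose right-hand side is continuous into $H^2(\Omega)\cap H^1_0(\Omega)$ because $u_0$ lies in that space and $w$ is continuous into it. Consequently $u\in\mathcal{C}\left([0,\infty),H^2\cap H^1_0\right)$, then $u_t=w-u$ has the same regularity, so $u\in\mathcal{C}^1\left([0,\infty),H^2\cap H^1_0\right)$, and finally $u_{tt}=-A(u+u_t)+F\in\mathcal{C}\left([0,\infty),L^2(\Omega)\right)$ from the equation itself. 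With this substitution your argument becomes a complete proof; as written, its last sentence assumes the conclusion. (Two smaller points: identifying $\mathcal{A}$ as the generator requires a resolvent or Lumer--Phillips argument, not merely the energy bounds of Proposition \ref{prop2.1}; and your $S(t)$ clashes with the paper's notation $S(t)g=R(t)(0,g)$, which denotes a component of the solution operator rather than the matrix semigroup.)
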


Next, let us define a mild solution and a weak solution to inhomogeneous problem, one by one.
\begin{defn}[Mild solution] Let $(u_0,u_1)\in \left(H^1_0(\Omega)\times L^2(\Omega)\right)^2$ and $F\in \mathcal{C}\left([0,\infty),L^2(\Omega)\right)$. A function $u$ is said to be a mild solution to \eqref{eq2.4} if 
	\begin{align*}
	u\in \mathcal{C}\left([0,\infty),H^1_0(\Omega)\right)\cap \mathcal{C}^{1}\left([0,\infty),H^1(\Omega)\right),
	\end{align*}
	and $u$ has initial data $u(0,x)=u_0(x)$, $u_t(0,x)=u_1(x)$ and satisfies the integral equation
	\begin{align}\label{eq2.5}
	u(t,x)=R(t)(u_0,u_1)(x)+\int\nolimits_0^tS(t-s)F(s,x)\,\mathrm{d}s
	\end{align}
	in the sense of $H^1(\Omega)$, where $S(t)g(x):=R(t)(0,g)(x)$ for all $g\in L^2(\Omega)$.
\end{defn}
\begin{rem}
	It is easy to check that $S(t)(-\Delta u_0+u_1)(x)+\partial_t S(t)u_0(x)$ is a strong solution of \eqref{eq2.1}. It follows by the uniqueness that
	\begin{align*}
	R(t)(u_0,u_1)(x)=S(t)(-\Delta u_0(x)+u_1(x))+\partial_t S(t)u_0(x).
	\end{align*}
\end{rem}
\begin{defn}[Weak solution] Let $T>0$, $(u_0,u_1)\in L_{\mathrm{loc}}^1(\Omega)\times L_{\mathrm{loc}}^1(\Omega)$ and  $F\in L^1\left((0,T),L_{\mathrm{loc}}^1(\Omega)\right)$. A function $u$ is said to be a weak solution to \eqref{eq2.4} if
	\begin{align*}
	u\in L^1\left((0,T),L_{\mathrm{loc}}^1(\Omega)\right),
	\end{align*}
	and $u$ has initial data $u(0,x)=u_0(x)$, $u_t(0,x)=u_1(x)$ and satisfies the relation
	\begin{align}\label{eq2.6}
	&\int\nolimits_0^T\int\nolimits_{\Omega}F(t,x)\varphi(t,x)\,\mathrm{d}x\,\mathrm{d}t+\int\nolimits_{\Omega}u_1(x)\varphi(0,x)\,\mathrm{d}x-\int\nolimits_{\Omega}u_0(x)\Delta\varphi(0,x)\,\mathrm{d}x-\int\nolimits_{\Omega}u_0(x)\varphi_t(0,x)\,\mathrm{d}x\notag\\
	&=\int\nolimits_0^T\int\nolimits_{\Omega}u(0,x)\varphi_{tt}(0,x)\,\mathrm{d}x\,\mathrm{d}t+\int\nolimits_0^T\int\nolimits_{\Omega}u(t,x)\Delta\varphi_t(t,x)\,\mathrm{d}x\,\mathrm{d}t-\int\nolimits_0^T\int\nolimits_{\Omega}u(t,x)\Delta\varphi(t,x)\,\mathrm{d}x\,\mathrm{d}t
	\end{align}
	for all compactly supported test function $\varphi\in \mathcal{C}^2([0,T]\times\Omega)$ such that $\varphi(T,x)=0$ and $\varphi_t(T,x)=0$.
\end{defn}
\begin{prop}\label{prop2.3} Let $(u_0,u_1)\in \left(H^2(\Omega)\cap H^1_0(\Omega)\right)^2$ and $$F\in \mathcal{C}\left([0,\infty),H^2(\Omega)\cap H^1_0(\Omega)\right)\cap \mathcal{C}^1\left([0,\infty),L^2(\Omega)\right)$$ and $u$ be a strong solution of \eqref{eq2.4}. Then $u$ also is a mild solution, and satisfies the following energy estimates:
	\begin{align}
	&\|(u_t,\nabla u)(t,\cdot)\|_{L^2\times L^2}\leqslant C\|(u_1,\nabla
	u_0)\|_{L^2\times L^2}+C\int\nolimits_0^t\|F(s,\cdot)\|_{L^2}\,\mathrm{d}s,\label{eq2.7}\\
	&\|u(t,\cdot)\|_{L^2}\leqslant \|u_0\|_{L^2}+C\int\nolimits_0^t\left(\|(u_1,\nabla
	u_0)\|_{L^2\times
		L^2}+\int\nolimits_0^s\|F(\tau,\cdot)\|_{L^2}\,\mathrm{d}\tau\right)\mathrm{d}s,\label{eq2.8}\\
	&\|\nabla u_t(t,\cdot)\|^2_{L^2}\leqslant  C\|(\nabla
	u_0,u_1)\|^2_{L^2\times H^1}+C\int\nolimits_0^t\|F(s,\cdot)\|_{L^2}^2\,\mathrm{d}s +C\|\nabla u(t,\cdot)\|^2_{L^2}\notag\\
	&\qquad\qquad\qquad\,\,\,\,+C\int\nolimits_0^t \|F(s,\cdot)\|_{L^2}\|u_s(s,\cdot)\|_{L^2}\,\mathrm{d}s.\label{eq2.16}
	\end{align}
	
\end{prop}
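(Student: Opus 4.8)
The plan is to treat the representation claim and the three energy estimates by separate means: the former through uniqueness of strong solutions, the latter through the classical energy method. To show that the strong solution $u$ satisfies \eqref{eq2.5}, I would set $w(t,x):=R(t)(u_0,u_1)(x)+\int_0^t S(t-s)F(s,x)\,\mathrm{d}s$ and argue that, under the regularity assumed on $F$, the variation-of-parameters (Duhamel) formula makes $w$ itself a strong solution of \eqref{eq2.4} with the same initial data. Since $F\in\mathcal{C}^1([0,\infty),L^2(\Omega))$, Proposition \ref{prop2.2} supplies uniqueness of strong solutions, whence $u=w$ and \eqref{eq2.5} holds. As the strong-solution class is contained in the mild-solution class $\mathcal{C}([0,\infty),H^1_0(\Omega))\cap\mathcal{C}^1([0,\infty),H^1(\Omega))$, $u$ is automatically a mild solution.

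For \eqref{eq2.7} I would multiply the equation in \eqref{eq2.4} by $u_t$ and integrate over $\Omega$; since $u=0$ on $\partial\Omega$ forces $u_t=0$ there, all boundary terms vanish and one obtains
\[
\tfrac12\frac{\mathrm{d}}{\mathrm{d}t}\left(\|u_t\|_{L^2}^2+\|\nabla u\|_{L^2}^2\right)+\|\nabla u_t\|_{L^2}^2=\int_{\Omega}F u_t\,\mathrm{d}x .
\]
Discarding the nonnegative term $\|\nabla u_t\|_{L^2}^2$, bounding the right-hand side by $\|F\|_{L^2}\|u_t\|_{L^2}$, and applying the usual square-root argument to $E(t):=\|u_t\|_{L^2}^2+\|\nabla u\|_{L^2}^2$ yields \eqref{eq2.7}. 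Estimate \eqref{eq2.8} then follows immediately from $\|u(t,\cdot)\|_{L^2}\leqslant\|u_0\|_{L^2}+\int_0^t\|u_s(s,\cdot)\|_{L^2}\,\mathrm{d}s$ together with \eqref{eq2.7}.

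The estimate \eqref{eq2.16} is the heart of the matter, and here I would use the higher-order multiplier $u_{tt}$. Testing the equation against $u_{tt}$ and integrating over $\Omega\times(0,t)$, the integration-by-parts identities $\int_\Omega\Delta u\,u_{tt}\,\mathrm{d}x=-\frac{\mathrm{d}}{\mathrm{d}t}\int_\Omega\nabla u\cdot\nabla u_t\,\mathrm{d}x+\|\nabla u_t\|_{L^2}^2$ and $\int_\Omega\Delta u_t\,u_{tt}\,\mathrm{d}x=-\tfrac12\frac{\mathrm{d}}{\mathrm{d}t}\|\nabla u_t\|_{L^2}^2$, combined with Young's inequality on $\int_\Omega F u_{tt}\,\mathrm{d}x$, produce a bound for $\|\nabla u_t(t,\cdot)\|_{L^2}^2$ involving the boundary term $-\int_\Omega\nabla u(t,\cdot)\cdot\nabla u_t(t,\cdot)\,\mathrm{d}x$ and the quantity $\int_0^t\|\nabla u_s(s,\cdot)\|_{L^2}^2\,\mathrm{d}s$. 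Two maneuvers then close the argument: applying Young's inequality to that boundary term and absorbing the resulting fraction of $\|\nabla u_t(t,\cdot)\|_{L^2}^2$ into the left-hand side, and controlling $\int_0^t\|\nabla u_s\|_{L^2}^2\,\mathrm{d}s$ by integrating the first-order energy identity of the previous paragraph. This reproduces precisely the four terms on the right of \eqref{eq2.16}.

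The main obstacle is the rigorous justification of this last computation: the identity $\int_\Omega\Delta u_t\,u_{tt}\,\mathrm{d}x=-\tfrac12\frac{\mathrm{d}}{\mathrm{d}t}\|\nabla u_t\|_{L^2}^2$ requires $u_{tt}(t,\cdot)\in H^1(\Omega)$, whereas the definition of a strong solution only furnishes $u_{tt}(t,\cdot)\in L^2(\Omega)$. This is exactly why Proposition \ref{prop2.3} imposes the stronger hypothesis $F\in\mathcal{C}([0,\infty),H^2(\Omega)\cap H^1_0(\Omega))$. I would close this gap by an approximation argument: replace $(u_0,u_1)$ and $F$ by smoother data whose corresponding strong solutions (furnished by Proposition \ref{prop2.2}) are regular enough for every integration by parts above to be legitimate, derive \eqref{eq2.16} for them, and then pass to the limit using the continuous dependence already encoded in \eqref{eq2.7}.
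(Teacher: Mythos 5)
Your proposal is correct and follows essentially the same route as the paper: the Duhamel representation combined with uniqueness of strong solutions for the mild-solution claim, the $u_t$ multiplier with a Gr\"onwall/square-root argument for \eqref{eq2.7}, the identity $u(t,\cdot)=u_0+\int\nolimits_0^t u_s(s,\cdot)\,\mathrm{d}s$ for \eqref{eq2.8}, and the $u_{tt}$ multiplier with the same two absorption maneuvers (Young's inequality on the cross term $\int\nolimits_\Omega\nabla u\cdot\nabla u_t\,\mathrm{d}x$, and the first-order energy identity to control $\int\nolimits_0^t\|\nabla u_s(s,\cdot)\|^2_{L^2}\,\mathrm{d}s$) for \eqref{eq2.16}. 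The only deviation is your closing approximation argument to justify the $u_{tt}$ computation when one only knows $u_{tt}(t,\cdot)\in L^2(\Omega)$; the paper performs that computation formally without comment, so your extra step is a refinement of the same proof rather than a different method.
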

\begin{proof}
	Let us define
	\begin{align*}
	\tilde{u}(t,x):=R(t)(u_0,u_1)(x)+\int\nolimits_0^tS(t-s)F(s,x)\,\mathrm{d}s.
	\end{align*}
	According to the assumptions on initial data and the right-hand sides, the Proposition \ref{prop2.1} leads that $\tilde{u}$ is a strong solution of \eqref{eq2.4}. Hence, by the uniqueness we claim $u=\tilde{u}$, i.e., $u$ is a mild solution.\\ 
	Next, we start to prove estimate \eqref{eq2.7}. Multiplying \eqref{eq2.4} by $u_t$ and integrating the result over $\Omega$  one has
	\begin{align*}
	\frac{1}{2}\frac{\mathrm{d}}{\mathrm{d}t}\int\nolimits_\Omega \left(|u_t(t,x)|^2+|\nabla u(t,x)|^2\right)\mathrm{d}x+\int\nolimits_\Omega |\nabla u_t(t,x)|^2\,\mathrm{d}x=\int\nolimits_\Omega F(t,x)u_t(t,x)\,\mathrm{d}x,
	\end{align*}
	where the divergence theorem was applied together with the boundary condition. Integrating the above equality over $[0,t]$, and using the Cauchy-Schwarz inequality, we deduce that
	\begin{align*}
	&\frac{1}{2}\int\nolimits_\Omega \left(|u_t(t,x)|^2+|\nabla u(t,x)|^2\right)\mathrm{d}x+\int\nolimits_0^t\int\nolimits_\Omega |\nabla u_s(s,x)|^2\,\mathrm{d}x\,\mathrm{d}s\\
	&\leqslant \frac{1}{2}\|(u_1,\nabla u_0)\|_{L^2\times L^2}^2+\int\nolimits_0^t \|F(s,\cdot)\|_{L^2}\|u_s(s,\cdot)\|_{L^2}\,\mathrm{d}s.
	\end{align*} 
	It follows that
	\begin{align}
	\int\nolimits_0^t\int\nolimits_\Omega |\nabla u_s(s,x)|^2\,\mathrm{d}x\,\mathrm{d}s&\leqslant \frac{1}{2}\|(u_1,\nabla u_0)\|_{L^2\times L^2}^2+\int\nolimits_0^t \|F(s,\cdot)\|_{L^2}\|u_s(s,\cdot)\|_{L^2}\,\mathrm{d}s,\label{eq2.13}
	\end{align}
	and
	\begin{align}
	&\frac{1}{2}\int\nolimits_\Omega \left(|u_t(t,x)|^2+|\nabla u(t,x)|^2\right)\mathrm{d}x\notag\\
	&\leqslant \frac{1}{2}\|(u_1,\nabla u_0)\|_{L^2\times L^2}^2+\int\nolimits_0^t \|F(s,\cdot)\|_{L^2}\left(\|u_s(s,\cdot)\|_{L^2}^2+\|\nabla u(s,\cdot)\|^2_{L^2}\right)^{1/2}\mathrm{d}s.\label{eq2.14}
	\end{align}
	Applying Gr\"onwall's Lemma (see e.g. Lemma 9.12 in \cite{Yuta}) to \eqref{eq2.14}, we conclude our desired estimates.
	%
	
	To prove \eqref{eq2.16}, multiplying \eqref{eq2.4} by $u_{tt}$ and integrating it over $\Omega$, we immediately derive
	\begin{align*}\int\nolimits_\Omega |u_{tt}(t,x)|^2\,\mathrm{d}x-\int\nolimits_\Omega \Delta u(t,x)u_{tt}(t,x)\,\mathrm{d}x-\int\nolimits_\Omega \Delta u_t(t,x)u_{tt}(t,x)\,\mathrm{d}x=\int\nolimits_\Omega F(t,x)u_{tt}(t,x)\,\mathrm{d}x.
	\end{align*}
	By the boundary condition, Young's inequality and the divergence theorem, the following estimate holds:
	\begin{align*}
	&\frac{1}{2}\frac{\mathrm{d}}{\mathrm{d}t}\int\nolimits_\Omega |\nabla u_t(t,x)|^2\,\mathrm{d}x+\int\nolimits_\Omega |u_{tt}(t,x)|^2\,\mathrm{d}x+\int\nolimits_\Omega\nabla u(t,x)\cdot \nabla u_{tt}(t,x)\,\mathrm{d}x\\
	&= \int\nolimits_\Omega F(t,x)u_{tt}(t,x)\,\mathrm{d}x\leqslant \frac{1}{2}\int\nolimits_\Omega |F(t,x)|^2\,\mathrm{d}x+ \frac{1}{2}\int\nolimits_\Omega |u_{tt}(t,x)|^2\,\mathrm{d}x,
	\end{align*}
	which implies
	\begin{align*}
	\frac{1}{2}\frac{\mathrm{d}}{\mathrm{d}t}\int\nolimits_\Omega |\nabla u_t(t,x)|^2\,\mathrm{d}x+\int\nolimits_\Omega\nabla u(t,x)\cdot (\nabla u_t(t,x))_t\,\mathrm{d}x\leqslant \frac{1}{2}\int\nolimits_\Omega |F(t,x)|^2\,\mathrm{d}x.
	\end{align*}
	Integrating over $[0,t]$, we conclude 
	\begin{align*}
	\frac{1}{2}\int\nolimits_\Omega |\nabla u_t(t,x)|^2\,\mathrm{d}x+\int\nolimits_0^t\int\nolimits_\Omega\nabla u(t,x)\cdot (\nabla u_s(s,x))_s\,\mathrm{d}x\,\mathrm{d}s\leqslant \frac{1}{2}\int\nolimits_0^t\int\nolimits_\Omega |F(s,x)|^2\,\mathrm{d}x\,\mathrm{d}s+ \frac{1}{2}\|\nabla u_1\|^2_{L^2}.
	\end{align*}
	Then, we derive
	\begin{align*}
	\frac{1}{2}\int\nolimits_\Omega |\nabla u_t(t,x)|^2\,\mathrm{d}x&\leqslant \frac{1}{2}\|\nabla u_1\|_{L^2}^2+\int\nolimits_0^t\int\nolimits_\Omega |\nabla u_s(s,x)|^2\,\mathrm{d}x\,\mathrm{d}s-\int\nolimits_\Omega \nabla u(t,x)\cdot\nabla u_t(t,x)\,\mathrm{d}x\\
	&\quad+\int\nolimits_\Omega \nabla u_0(x)\cdot \nabla u_1(x)\,\mathrm{d}x+\frac{1}{2}\int\nolimits_0^t\int\nolimits_\Omega |F(s,x)|^2\,\mathrm{d}x\,\mathrm{d}s\\
	&\leqslant\|\nabla u_1\|_{L^2}^2+\frac{1}{2}\|\nabla u_0\|_{L^2}^2+\int\nolimits_0^t\int\nolimits_\Omega |\nabla u_s(s,x)|^2\,\mathrm{d}x\,\mathrm{d}s+\frac{1}{4}\int\nolimits_\Omega |\nabla u_t(t,x)|^2\,\mathrm{d}x\\
	&\quad+\int\nolimits_\Omega |\nabla u(t,x)|^2\,\mathrm{d}x+\frac{1}{2}\int\nolimits_0^t\int\nolimits_\Omega |F(s,x)|^2\,\mathrm{d}x\,\mathrm{d}s,
	\end{align*}
	where we have used Young's inequalities such that $AB\leqslant\frac{1}{4}A^2+B^2$ and $AB\leqslant\frac{1}{2}A^2+\frac{1}{2}B^2$, for all $A,B\geqslant 0$. So,
	it implies together with \eqref{eq2.7} and \eqref{eq2.13} the desired estimate.
	
	Finally, we begin to prove \eqref{eq2.8}. Let $T>0$, for all $0\leqslant t\leqslant T$ the integral formula shows
	\begin{align*}
	u(t,x)=u_0(x)+\int\nolimits_0^tu_s(s,x)\,\mathrm{d}s.
	\end{align*}
	By using our derived result \eqref{eq2.7}, we achieve our aim.
	This completes the proof.
\end{proof}
\begin{prop}\label{prop2.4}
	Let $(u_0,u_1)\in H_0^1(\Omega)\times H^1(\Omega)$ and $F\in \ml{C}\left([0,\infty),L^2(\Omega)\right)$. Then,
	there exists a unique mild solution $u$ to \eqref{eq2.4}. Moreover,
	the mild solution $u$ satisfies the estimates \eqref{eq2.7}-\eqref{eq2.16}.
\end{prop}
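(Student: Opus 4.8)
The plan is a density-and-limiting argument built on the strong-solution theory. Uniqueness is immediate: by definition a mild solution must satisfy the Duhamel identity \eqref{eq2.5}, so two mild solutions sharing the same data $(u_0,u_1)$ and the same source $F$ necessarily coincide. Hence the real content is existence together with the three energy estimates \eqref{eq2.7}--\eqref{eq2.16}. First I would regularise the data: choose sequences $(u_0^k,u_1^k)\in\bigl(H^2(\Omega)\cap H^1_0(\Omega)\bigr)^2$ approximating $(u_0,u_1)$ in $H^1_0(\Omega)\times H^1(\Omega)$, and $F^k\in\mathcal{C}\bigl([0,\infty),H^2(\Omega)\cap H^1_0(\Omega)\bigr)\cap\mathcal{C}^1\bigl([0,\infty),L^2(\Omega)\bigr)$ approximating $F$ in $\mathcal{C}([0,T],L^2(\Omega))$ for every $T>0$. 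Proposition \ref{prop2.2} then supplies a strong solution $u^k$ of \eqref{eq2.4} with these regularised data, and Proposition \ref{prop2.3} guarantees that each $u^k$ is already a mild solution obeying \eqref{eq2.7}--\eqref{eq2.16}.

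Next I would exploit linearity: the difference $u^k-u^j$ is the strong solution of \eqref{eq2.4} driven by data $(u_0^k-u_0^j,u_1^k-u_1^j)$ and source $F^k-F^j$, so applying the \emph{linear} estimates \eqref{eq2.7}, \eqref{eq2.8} and \eqref{eq2.16} to it controls, respectively, $\|\nabla(u^k-u^j)\|_{L^2}$ and $\|\partial_t(u^k-u^j)\|_{L^2}$, the $L^2$-norm $\|u^k-u^j\|_{L^2}$, and $\|\nabla\partial_t(u^k-u^j)\|_{L^2}$ on $[0,T]$ by the (vanishing) differences of data and source. This shows $(u^k)$ is Cauchy in $\mathcal{C}([0,T],H^1_0(\Omega))\cap\mathcal{C}^1([0,T],H^1(\Omega))$ for each $T$; call the limit $u$. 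Because the operators $R(t)$ and $S(t-s)$ are bounded uniformly on compact time intervals (Remark \ref{rmk2.1}) and depend continuously on their arguments, passing to the limit in the identity \eqref{eq2.5} written for $u^k$ shows that $u$ itself satisfies \eqref{eq2.5}, i.e.\ $u$ is a mild solution attaining the data $(u_0,u_1)$; letting $k\to\infty$ in \eqref{eq2.7}--\eqref{eq2.16} (the norms being continuous under the convergence just established) transfers the three estimates to $u$.

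The delicate points, where I expect the real work to lie, are two. The first is the gradient estimate \eqref{eq2.16} for $\nabla u_t$: it is precisely what upgrades the velocity from $\mathcal{C}([0,\infty),L^2)$ to the asserted $\mathcal{C}^1([0,\infty),H^1)$ regularity, and, being coupled to \eqref{eq2.7} and \eqref{eq2.13} in its derivation, one must check that these couplings survive the limit. The second, and I think the genuine obstacle, is the approximation of $u_1$ in the $H^1$-norm that enters the right-hand side of \eqref{eq2.16}: since the strong-solution data are forced to lie in $H^2\cap H^1_0$ while $u_1$ is prescribed only in $H^1(\Omega)$, one needs the compatibility and density of $H^2\cap H^1_0$ in the relevant (boundary-trace-constrained) closed subspace of $H^1(\Omega)$. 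This is the \textit{modification of the energy space} alluded to in the reference to \cite{Yuta}. Along the way I would also verify that the Duhamel term $\int_0^t S(t-s)F(s,\cdot)\,\mathrm{d}s$ is a well-defined, $H^1$-valued continuous function, which again rests on the operator bounds of Remark \ref{rmk2.1} and the continuity of $F$ into $L^2$.
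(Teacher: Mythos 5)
Your proposal is correct and follows essentially the same route as the paper: regularize $(u_0,u_1,F)$ by density, obtain strong solutions via Proposition \ref{prop2.2}, apply the linear energy estimates of Proposition \ref{prop2.3} to the differences $u^{(j)}-u^{(k)}$ to get a Cauchy sequence in $\mathcal{C}\left([0,T],H^1_0(\Omega)\right)\cap\mathcal{C}^1\left([0,T],H^1(\Omega)\right)$, pass to the limit in the Duhamel identity using the extended operators of Remark \ref{rmk2.1}, and transfer the estimates to the limit; uniqueness is read off from \eqref{eq2.5} exactly as you say. The delicate point you flag --- that approximating $u_1\in H^1(\Omega)$ by elements of $H^2(\Omega)\cap H^1_0(\Omega)$ in the $H^1$-norm is only possible within the closure of that space --- is genuine, but the paper's proof asserts the same density claim without further comment, so your attempt is at least as complete as the original.
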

\begin{proof}
	Let us prove existence first. Let $T_0>0$ be an arbitrary number. By the density argument, there exist sequences 
	\begin{align*}
	\left\{\left(u_0^{(j)},u_1^{(j)}\right)\right\}_{j=1}^\infty\subseteq
	\left(H^2(\Omega)\cap H^1_0(\Omega)\right)^2,
	\end{align*}
	\begin{align*}
	\left\{F^{(j)}\right\}_{j=1}^\infty\subseteq
	\ml{C}\left([0,T_0],H^2(\Omega)\cap H^1_0(\Omega)\right)\cap \ml{C}^1\left([0,T_0],L^2(\Omega)\right)
	\end{align*}
	such that
	\begin{align*}
	\lim_{j\rightarrow\infty}\left(u_0^{(j)},u_1^{(j)}\right)=(u_0,u_1)\;\;\mbox{in}\;\; H^1_0(\Omega)\times H^1(\Omega),\quad \lim_{j\rightarrow\infty}F^{(j)}=F\;\;\mbox{in}\;\; \ml{C}\left([0,T_0],L^2(\Omega)\right).
	\end{align*}
	Using Proposition \ref{prop2.2}, let $u^{(j)}$ be the strong solution of the linear inhomogeneous
	equation \eqref{eq2.4} with initial data $\left(u_0^{(j)},u_1^{(j)}\right)$
	and the inhomogeneous term $F^{(j)}(t,x)$. Then, the difference $u^{(j)}-u^{(k)}$ with $j,k\geqslant 1$, is a strong solution of the initial value problem 
	\begin{align*}
	\begin{cases}
	u_{tt}-\Delta
	u -\Delta u_t =F^{(j)}(t,x)-F^{(k)}(t,x), &x\in {\Omega},\,t>0,\\
	u(0,x)=  u^{(j)}_0(x)-u^{(k)}_0(x),\;u_t(0,x)=  u_1^{(j)}(x)-u_1^{(k)}(x),&x\in {\Omega},\\
	u=0,&x\in\partial\Omega,\,t>0.
	\end{cases}
	\end{align*}
	
	Applying Proposition \ref{prop2.3} to $u^{(j)}-u^{(k)}$, we have
	\begin{align*}
	\left\|\left(\partial_t\left(u^{(j)}-u^{(k)}\right),\nabla \left(u^{(j)}-u^{(k)}\right)\right)(t,\cdot)\right\|_{L^2\times L^2}&\leqslant C\left\|\left(u_1^{(j)}-u_1^{(k)},\nabla
	\left(u_0^{(j)}-u_0^{(k)}\right)\right)\right\|_{L^2\times L^2}\\
	&\quad+C\,T_0\sup_{s\in[0,T_0]}\left\|\left(F^{(j)}-F^{(k)}\right)(s,\cdot)\right\|_{L^2},
	\end{align*}
	\begin{align*}
	\left\|\left(u^{(j)}-u^{(k)}\right)(t,\cdot)\right\|_{L^2}&\leqslant \left\|u_0^{(j)}-u_0^{(k)}\right\|_{L^2}+C\,T_0\left\|\left(u_1^{(j)}-u_1^{(k)},\nabla
	\left(u_0^{(j)}-u_0^{(k)}\right)\right)\right\|_{L^2\times
		L^2}\\
	&\quad+C\,T_0^2\sup_{s\in[0,T_0]}\left\|\left(F^{(j)}-F^{(k)}\right)(s,\cdot)\right\|_{L^2},
	\end{align*}
	and
	\begin{align*}
	&\left\|\nabla\partial_t\left(u^{(j)}-u^{(k)}\right)(t,\cdot)\right\|_{L^2}\\
	&\leqslant C\left\|\left(\nabla\left(
	u_0^{(j)}-u_0^{(k)}\right),u_1^{(j)}-u_1^{(k)}\right)\right\|^2_{L^2\times H^1}+C\,T_0\sup_{s\in[0,T_0]}\left\|\left(F^{(j)}-F^{(k)}\right)(s,\cdot)\right\|^2_{L^2} \\
	&\quad+C\left\|\nabla  \left(u^{(j)}-u^{(k)}\right)(t,\cdot)\right\|^2_{L^2}+C\sup_{s\in[0,T_0]}\left\|\left(F^{(j)}-F^{(k)}\right)(s,\cdot)\right\|_{L^2}\int\nolimits_0^t \left\|\partial_s\left(u^{(j)}-u^{(k)}\right)(s,\cdot)\right\|_{L^2}\,\mathrm{d}s.
	\end{align*}
	This shows that $\left\{u^{(j)}\right\}_{j=1}^\infty$ is a Cauchy sequence in the Banach space $$\ml{C}\left([0,T_0],H^1_0(\Omega)\right)\cap \ml{C}^1\left([0,T_0],H^1(\Omega)\right).$$ Therefore, we can define the limit 
	\begin{equation}\label{eq2.9}
	\lim_{j\rightarrow\infty}u^{(j)}=u\in \ml{C}\left([0,\infty),H^1_0(\Omega)\right)\cap \ml{C}^1\left([0,\infty),H^1(\Omega)\right),
	\end{equation}
	since $T_0>0$ is arbitrary. Applying again Proposition \ref{prop2.3} to $u^{(j)}$, it follows that $u^{(j)}$ satisfies the integral equation
	\begin{align*}
	u^{(j)}(t,x)=R(t)\left(u^{(j)}_0,u^{(j)}_1\right)(x)+\int\nolimits_0^tS(t-s)F^{(j)}(s,x)\,\mathrm{d}s.
	\end{align*}
	According to Remark \ref{rmk2.1}, the operators $R(t)$ and $S(t)$ can be extended uniquely to the operators defined on $H_0^1(\Omega)\times L^2(\Omega)$ and $L^2(\Omega)$, respectively. Letting $j\rightarrow\infty$, one may obtain
	\begin{align*}
	u(t,x)=R(t)(u_0,u_1)(x)+\int\nolimits_0^tS(t-s)F(s,x)\,\mathrm{d}s,
	\end{align*}
	which indicates that $u$ is a mild solution of \eqref{eq2.4}.
	
	To prove uniqueness, we find that if two functions $u$ and $v$ satisfy the integral equation \eqref{eq2.5}, then we immediately have $u=v$.
	
	Concerning energy estimates, by Proposition \ref{prop2.3}, each strong solution $u^{(j)}$ constructed above satisfies the estimates \eqref{eq2.7}-\eqref{eq2.16} with $u_0^{(j)},u_1^{(j)},F^{(j)}$. By letting $j\rightarrow\infty$ and using \eqref{eq2.9}, the same estimates hold for the mild solution $u$. The proof is complete.
\end{proof}

\subsection{Harmonic functions}
In this subsection, we give some harmonic function that will be used in the proof of Theorems \ref{a=0} and \ref{a>0}.
\begin{lem}\label{lemma1}
	There exists a function $\phi_0(x)\in C^2(\Omega)\cap C(\overline{\Omega})$ for $n\geqslant 3$ satisfying the boundary value problem
	\begin{equation}\label{function1}
	\begin{cases}
	\Delta \phi_0(x)=0, \;&x\in\Omega,\\
	\phi_0(x)=0,\; &x\in\partial\Omega,\\
	\phi_0(x)\rightarrow 1,\,&|x|\rightarrow\infty.
	\end{cases}
	\end{equation}
	Moreover, the function $\phi_0(x)$ satisfies $0<\phi_0(x)<1$ for all $x\in\Omega$, and $\phi_0(x)\geqslant C$ for all $|x|\gg1$. Furthermore, for all $|x|\gg1$ we have $|\nabla \phi_0(x)|\leqslant C|x|^{1-n}$.
\end{lem}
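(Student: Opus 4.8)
The plan is to construct $\phi_0$ as the monotone limit of solutions to Dirichlet problems on bounded truncations of $\Omega$, and then to read off all the quantitative properties from explicit radial barriers together with the interior gradient estimate for harmonic functions. Throughout, the hypothesis $n\geqslant 3$ enters only through the fact that the radial harmonic function $|x|^{2-n}$ decays at infinity; this is precisely what fails for $n=1,2$ and is what forces the separate constructions in Lemmas \ref{lemma4} and \ref{lemma2}.

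First I would set up the approximating problems. For each $k>R$ put $\Omega_k:=\Omega\cap B(k)$; since $\partial\Omega\subset B(R)$ and $k>R$, the two boundary components $\partial\Omega$ and $\{|x|=k\}$ are disjoint and smooth, so classical elliptic theory yields a unique $\phi_0^{(k)}\in C^2(\Omega_k)\cap C(\overline{\Omega_k})$ harmonic in $\Omega_k$, vanishing on $\partial\Omega$, and equal to $1$ on $\{|x|=k\}$. The weak maximum principle gives $0\leqslant\phi_0^{(k)}\leqslant 1$. Comparing $\phi_0^{(k)}$ with $\phi_0^{(k+1)}$ on $\partial\Omega_k$ (they agree on $\partial\Omega$, while on $\{|x|=k\}$ one has $\phi_0^{(k)}=1\geqslant\phi_0^{(k+1)}$) shows the sequence is nonincreasing in $k$, hence converges pointwise to some $\phi_0$ with $0\leqslant\phi_0\leqslant 1$. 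Interior Harnack/Schauder estimates upgrade this to convergence in $C^2_{\mathrm{loc}}(\Omega)$, so $\phi_0$ is harmonic in $\Omega$; a standard barrier at the smooth boundary $\partial\Omega$ shows $\phi_0\in C(\overline\Omega)$ with $\phi_0=0$ on $\partial\Omega$.

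The heart of the argument is the behavior at infinity, and here I would use the explicit lower barrier $v(x):=1-(R/|x|)^{n-2}$, which is harmonic on all of $\Omega$ because its only singularity, the origin, lies inside the obstacle $\mathcal{O}$. On $\partial\Omega$ one has $|x|<R$, so $v<0\leqslant\phi_0^{(k)}$, while on $\{|x|=k\}$ clearly $v<1=\phi_0^{(k)}$; the maximum principle then gives $v\leqslant\phi_0^{(k)}$ on $\Omega_k$, and letting $k\to\infty$ yields $v\leqslant\phi_0\leqslant 1$ on $\Omega$. Since $v(x)\to 1$ as $|x|\to\infty$, this forces $\phi_0(x)\to 1$, so $\phi_0(x)\geqslant C$ (say $C=1/2$) once $|x|$ is large; the same comparison records the quantitative decay $0\leqslant 1-\phi_0(x)\leqslant (R/|x|)^{n-2}=R^{n-2}|x|^{2-n}$. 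The strict bounds $0<\phi_0<1$ in $\Omega$ follow from the strong maximum principle, since $\phi_0$ is a nonconstant harmonic function attaining neither $0$ nor $1$ in the interior.

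Finally, for the gradient bound I would combine the decay just obtained with the interior derivative estimate for harmonic functions. Writing $w:=1-\phi_0$, which is harmonic, nonnegative, and satisfies $w(y)\leqslant R^{n-2}|y|^{2-n}$, note that for $|x|>2R$ the ball $B(x,|x|/2)$ lies entirely in $\Omega$ (the obstacle being bounded), so the estimate $|\nabla w(x)|\leqslant \tfrac{2n}{|x|}\sup_{B(x,|x|/2)}|w|$ applies. On that ball $|y|\geqslant|x|/2$, whence $\sup_{B(x,|x|/2)}|w|\leqslant R^{n-2}(|x|/2)^{2-n}$, and the two factors combine to give $|\nabla\phi_0(x)|=|\nabla w(x)|\leqslant C|x|^{1-n}$, as claimed. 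I expect the only genuinely delicate point to be the passage to the limit at infinity, i.e. ruling out the trivial limit $\phi_0\equiv 0$; this is exactly where the dimensional restriction $n\geqslant 3$ is indispensable, since it is the decay of the barrier $v$ (equivalently, transience of the Laplacian) that pins the limit down to $1$.
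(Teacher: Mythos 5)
Your proof is correct, and it is more self-contained than the paper's. The paper does not construct $\phi_0$ at all: it cites Lemma 2.2 of \cite{ZhouHan} for the existence of a solution with $0<\phi_0<1$, and then sandwiches $\phi_0$ between the explicit radial solutions $\phi_1,\phi_2$ of the exterior problem on $\mathbb{R}^n\setminus B(r_1)$ and $\mathbb{R}^n\setminus B(r_2)$, where $B(r_1)\subseteq\mathcal{O}\subseteq B(r_2)$, reading off $\phi_0\geqslant C$ for $|x|\gg1$ from the lower function $\phi_1$. Your exhaustion construction ($\Omega_k=\Omega\cap B(k)$, monotone limit, interior estimates, boundary barrier) replaces that citation, and your lower barrier $v=1-(R/|x|)^{n-2}$ is the same comparison idea (one-sided rather than two-sided, which suffices since $\phi_0\leqslant 1$ is free from the maximum principle). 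The genuine divergence is in the gradient bound: the paper disposes of it with the assertion that ``standard elliptic theory implies $|\nabla\phi_0(x)|\sim|\nabla\phi_i(x)|$'', which as stated does not follow from the pointwise ordering $\phi_1\leqslant\phi_0\leqslant\phi_2$ (an ordering of functions does not order their gradients); your route --- the interior derivative estimate applied to the nonnegative harmonic function $w=1-\phi_0$ on $B(x,|x|/2)\subset\Omega$, combined with the quantitative decay $w\leqslant R^{n-2}|x|^{2-n}$ extracted from the barrier --- is the standard rigorous argument that the paper's one-line claim implicitly relies on, and it yields $|\nabla\phi_0(x)|\leqslant C|x|^{1-n}$ cleanly. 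In short, you pay in length for rigor and self-containedness, while the paper pays for brevity by outsourcing existence and leaving the gradient comparison at a heuristic level; note also that the paper's formula $\phi_i(x)=r_i^{2-n}-|x|^{2-n}$ is misnormalized (it tends to $r_i^{2-n}$, not $1$, at infinity), whereas your barrier is the correctly normalized one.
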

\begin{proof}
	From Lemma 2.2 in \cite{ZhouHan} there exists a regular solution $\phi_0$ of \eqref{function1} such that $0<\phi_0(x)<1$, for all $x\in\Omega$. To obtain the last two properties of $\phi_0$, it is easy to see that since $\mathcal{O}$ is bounded, there exist $r_2>r_1>0$ such that $B(r_1)\subseteq\mathcal{O}\subseteq B(r_2)$. By the maximum principle we conclude that $\phi_1(x)\leqslant \phi_0(x)\leqslant \phi_2(x)$ in $\Omega$, where $\phi_1(x)$ and $\phi_2(x)$ are, respectively, the solution of \eqref{function1} on $\mathbb{R}^n\setminus B(r_1)$ and $\mathbb{R}^n\setminus B(r_2)$. We remember that $\phi_i(x)=r_i^{2-n}-|x|^{2-n}$ for $i=1,2$. Moreover, the standard elliptic theory implies that $|\nabla \phi_0(x)|\sim |\nabla \phi_{i}(x)|$ for $i=1,2$. As $|\phi_{1}(x)|\geqslant C$ and $|\nabla\phi_{i}(x)|\leqslant C|x|^{1-n}$ when $|x|\gg1$, this completes the proof.
\end{proof}

Similarly, we have the following Lemmas in one and two dimensions, respectively.
\begin{lem}[Lemma 2.5 in \cite{Han}]\label{lemma4}
	There exists a function $\phi_0(x)\in \ml{C}^2(\Omega)\cap \ml{C}(\overline{\Omega})$ for $n=2$ satisfying the boundary value problem
	\begin{equation}\label{function2}
	\begin{cases}
	\Delta \phi_0(x)=0, \;&x\in\Omega,\\
	\phi_0(x)=0,\; &x\in\partial\Omega,\\
	\phi_0(x)\rightarrow +\infty,\,&|x|\rightarrow\infty\;\;\mbox{and}\;\phi_0(x)\;\mbox{increases at the rate of}\;\ln (|x|).
	\end{cases}
	\end{equation}
	Moreover, the function $\phi_0(x)$ satisfies $0<\phi_0(x)\leqslant C\ln (|x|)$ for all $x\in\Omega$, and $\phi_0(x)\geqslant C$ for all $|x|\gg1$. Furthermore, for all $|x|\gg1$, $|\nabla \phi_0(x)|\leqslant C|x|^{-1}$.
\end{lem}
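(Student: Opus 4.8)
The plan is to follow the same strategy as in the proof of Lemma~\ref{lemma1}, replacing the higher-dimensional radial harmonics $r_i^{2-n}-|x|^{2-n}$ by their two-dimensional counterparts built from $\ln|x|$, and to invoke the cited result of \cite{Han} for the delicate existence step.

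First I would establish existence of a harmonic $\phi_0$ on $\Omega$ vanishing on $\partial\Omega$ and growing logarithmically. Following Lemma~2.5 in \cite{Han}, this is obtained by an exhaustion argument: one solves the Dirichlet problems on the bounded domains $\Omega_R:=\Omega\cap B(R)$ with data $0$ on $\partial\Omega$ and a suitably normalised logarithmic value on $|x|=R$, and then passes to the limit $R\to\infty$. The maximum principle yields $\phi_0>0$ in $\Omega$ together with uniform bounds that allow one to extract, via interior elliptic estimates, a harmonic limit with the stated boundary behaviour. I would simply cite this construction and concentrate on the quantitative estimates.

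Next, since $\mathcal{O}$ is bounded there exist $r_2>r_1>0$ with $B(r_1)\subseteq\mathcal{O}\subseteq B(r_2)$, hence $\mathbb{R}^2\setminus B(r_2)\subseteq\Omega\subseteq\mathbb{R}^2\setminus B(r_1)$. I would introduce the explicit comparison functions
\begin{equation*}
\phi_i(x)=\ln|x|-\ln r_i=\ln\frac{|x|}{r_i},\qquad i=1,2,
\end{equation*}
each harmonic in $\mathbb{R}^2\setminus\{0\}$, vanishing on $|x|=r_i$, and growing like $\ln|x|$; since $r_1<r_2$ one has $\phi_2\leqslant\phi_1$. The difference $\phi_1-\phi_0$ is harmonic on $\Omega$, nonnegative on $\partial\Omega$ (there $|x|\geqslant r_1$), and bounded at infinity because $\phi_0$ and $\phi_1$ share the leading term $\ln|x|$; a Phragm\'en--Lindel\"of (exterior maximum) principle then gives $\phi_0\leqslant\phi_1$. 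Comparing $\phi_0$ with $\phi_2$ on $\mathbb{R}^2\setminus B(r_2)\subseteq\Omega$ (where $\phi_2=0$ on $|x|=r_2$ while $\phi_0>0$) yields $\phi_0\geqslant\phi_2$. Altogether
\begin{equation*}
\ln\frac{|x|}{r_2}\leqslant\phi_0(x)\leqslant\ln\frac{|x|}{r_1},
\end{equation*}
which delivers $0<\phi_0(x)\leqslant C\ln|x|$, the growth rate $\ln(|x|)$, and $\phi_0(x)\geqslant C$ for all $|x|\gg1$.

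Finally, for the gradient bound I would exploit that the sandwich above makes $w:=\phi_0-\ln|x|$ a bounded harmonic function on $\{|x|\gg1\}$. The interior gradient estimate for harmonic functions on the ball $B(x,|x|/2)$ gives $|\nabla w(x)|\leqslant C|x|^{-1}\sup_{B(x,|x|/2)}|w|\leqslant C|x|^{-1}$, and since $\nabla\ln|x|=x/|x|^2$ has modulus $|x|^{-1}$, we conclude $|\nabla\phi_0(x)|\leqslant C|x|^{-1}$ (equivalently, as in Lemma~\ref{lemma1}, standard elliptic theory gives $|\nabla\phi_0(x)|\sim|\nabla\phi_i(x)|=|x|^{-1}$). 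The main obstacle is the existence step: unlike the case $n\geqslant3$, in two dimensions every nonconstant harmonic function vanishing on $\partial\Omega$ is necessarily unbounded, so one cannot normalise to a bounded limit and must track the genuine logarithmic growth throughout the limiting procedure, which is precisely why the construction is delegated to \cite{Han}.
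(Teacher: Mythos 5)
The paper itself offers no proof of this lemma: it is quoted verbatim from Lemma 2.5 of \cite{Han}, just as Lemma \ref{lemma2} is quoted from \cite{Han2}; only the case $n\geqslant 3$ (Lemma \ref{lemma1}) is proved in the text. Your proposal is therefore a genuine addition rather than a parallel of the paper's argument, and it is essentially correct: you delegate only the existence step to \cite{Han} and then reconstruct the quantitative bounds by the same template the authors use for $n\geqslant 3$, with the comparison functions $\ln(|x|/r_i)$ replacing $r_i^{2-n}-|x|^{2-n}$, the exterior maximum principle giving the two-sided sandwich, and an interior gradient estimate giving $|\nabla\phi_0(x)|\leqslant C|x|^{-1}$. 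Two remarks. First, your comparison step tacitly assumes the normalisation $\phi_0(x)=\ln|x|+O(1)$; if Han's construction yields $\phi_0(x)\sim c\ln|x|$ with $c\neq 1$, then $\phi_1-\phi_0\sim(1-c)\ln|x|$ is unbounded and the bounded-harmonic-function maximum principle (Kelvin transform, removable singularity at infinity) does not apply as stated --- this is harmless, since the lemma only asserts bounds with unspecified constants $C$ and one can compare with $c\,\ln(|x|/r_i)$ instead, but it should be made explicit. Second, your gradient argument (setting $w:=\phi_0-\ln|x|$, observing that the sandwich makes $w$ bounded and harmonic for $|x|\gg 1$, and applying the interior gradient estimate on $B(x,|x|/2)$) is in fact more rigorous than the corresponding step in the paper's proof of Lemma \ref{lemma1}, where the assertion that standard elliptic theory implies $|\nabla\phi_0(x)|\sim|\nabla\phi_i(x)|$ is left unjustified. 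What each approach buys: the paper's pure citation is economical and shifts all responsibility to \cite{Han}; your argument makes the two-dimensional case self-contained up to existence, and it isolates exactly where the logarithmic growth (as opposed to the bounded behaviour available when $n\geqslant 3$) enters the comparison.
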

\begin{lem}[Lemma 2.2 in \cite{Han2}]\label{lemma2}
	There exists a function $\phi_0(x)\in \ml{C}^2([0,\infty))$ for $x\geqslant0$ satisfying the boundary value problem
	\begin{equation}\label{function3}
	\begin{cases}
	\Delta \phi_0(x)=0, \;&x>0,\\
	\phi_0(x)=0,\; &x=0,\\
	\phi_0(x)\rightarrow +\infty,\,&|x|\rightarrow\infty\;\;\mbox{and}\;\phi_0(x)\;\mbox{increases at the rate of linear function $x$}.
	\end{cases}
	\end{equation}
	Moreover, the function $\phi_0(x)$ satisfies that there exist two positive constants $C_1$ and $C_2$ such that, for all $x>0$, we have $C_1x\leqslant \phi_0(x)\leqslant C_2x$. In other words, we can take $\phi_0(x)=Cx$.
\end{lem}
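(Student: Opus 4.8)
The plan is to exploit the fact that in one space dimension the Laplacian degenerates into a plain second derivative, which turns the harmonic equation into an elementary constant-coefficient ordinary differential equation that can be integrated explicitly. First I would note that when $n=1$ the bounded obstacle $\mathcal{O}$ containing the origin reduces to an interval, so after normalization the exterior domain becomes a half-line which, for the purposes of this lemma, is written as $\{x>0\}$, and the Dirichlet condition $\phi_0=0$ on $\partial\Omega$ is imposed at the single point $x=0$. With this reduction the equation $\Delta\phi_0(x)=0$ in \eqref{function3} is nothing but $\phi_0''(x)=0$ for $x>0$.

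Integrating twice then gives $\phi_0(x)=Cx+D$ for two real constants $C$ and $D$. The boundary condition $\phi_0(0)=0$ forces $D=0$, so necessarily $\phi_0(x)=Cx$. To satisfy the remaining requirement that $\phi_0(x)\to+\infty$ as $|x|\to\infty$ at the rate of the linear function $x$, it suffices to pick any $C>0$. With such a choice the double inequality $C_1x\leqslant\phi_0(x)\leqslant C_2x$ holds for every $x>0$, for instance with $C_1=C_2=C$, or more generally with any $0<C_1<C<C_2$. The regularity claim $\phi_0\in\ml{C}^2([0,\infty))$ is immediate since $\phi_0$ is a polynomial, so all three items of \eqref{function3} are verified by this explicit $\phi_0$.

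There is essentially no serious obstacle here, in contrast to the cases $n=2$ and $n\geqslant 3$ treated in Lemmas \ref{lemma4} and \ref{lemma1}: the one-dimensional problem is a scalar linear ODE whose general solution is written down by inspection, and the boundary and growth conditions pin down the linear function $\phi_0(x)=Cx$ uniquely up to the positive scaling constant $C$. The only point worth flagging is that one should confirm that this explicit candidate meets harmonicity, the vanishing boundary value, and the prescribed linear growth to $+\infty$ simultaneously, which it does by construction, so that the lemma follows directly from the elementary solution of $\phi_0''=0$.
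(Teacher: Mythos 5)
Your proof is correct. Note that the paper itself does not prove Lemma \ref{lemma2} at all: it is imported verbatim as Lemma 2.2 of \cite{Han2}, so there is no internal argument to compare against. Your elementary derivation --- $\Delta\phi_0=\phi_0''$ in one dimension, hence $\phi_0(x)=Cx+D$, then $D=0$ from the Dirichlet condition at $x=0$ and $C>0$ from the prescribed linear growth to $+\infty$, giving $C_1x\leqslant\phi_0(x)\leqslant C_2x$ trivially and $\ml{C}^2$ regularity for free --- is the canonical (essentially the only) proof, and it makes the lemma self-contained rather than cited. One remark on your reduction to the half-line: for $n=1$ the exterior of the bounded obstacle $\mathcal{O}\ni 0$ actually consists of \emph{two} half-lines, not one; this does not affect the lemma, which is stated purely on $[0,\infty)$, but it is worth being aware that the blow-up argument for $n=1$ then runs on a single component of $\Omega$ (after translating its endpoint to the origin), which is exactly how the statement is formulated.
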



\section{Local existence of mild solutions}\label{sec3}
In this section, we will prove the local (in time) existence of mild solution (Proposition \ref{prop1}). We start by giving the definition of the mild and weak solution of \eqref{eq1}. Clearly, for a nonlinear equation it is not always true that the solution exists globally in-time. Therefore, we consider the solution defined on an interval $[0,T)$ for $T>0$. When $T<\infty$, such a solution is called local in-time mild (weak) solution, otherwise, it is called global in-time mild (weak) solution. Obviously, each global in-time solution locally exists.
\begin{defn}[Mild solution] Let $T>0$ and $(u_0,u_1)\in H^1_0(\Omega)\times H^1(\Omega)$. A function $u$ is said to be a mild solution of \eqref{eq1} if 
	\begin{align*}
	u\in \ml{C}\left([0,T),H^1_0(\Omega)\right)\cap \ml{C}^{1}\left([0,T),H^1(\Omega)\right),
	\end{align*}
	and $u$ has initial data $u(0,x)=u_0(x)$, $u_t(0,x)=u_1(x)$ and satisfies the integral equation
	\begin{equation}\label{eq3.1}
	u(t,x)=R(t)(u_0,u_1)(x)+\int\nolimits_0^tS(t-s)f(u,u_t)(s,x)\,\mathrm{d}s
	\end{equation}
	in the sense of $H^1(\Omega)$.
\end{defn}
\begin{defn}[Weak solution] Let $T>0$ and $(u_0,u_1)\in L_{\mathrm{loc}}^1(\Omega)\times L_{\mathrm{loc}}^1(\Omega)$. A function $u$ is said to be a weak solution of \eqref{eq1} if
	\begin{align*}
	&u\in  W^{1,q}\left((0,T),L_{\mathrm{loc}}^q(\Omega)\right)\quad\qquad\qquad\qquad\qquad\quad\mbox{if}\,\,f(u,u_t)=|u_t|^q,\\
	&u\in L^p\left((0,T),L_{\mathrm{loc}}^p(\Omega)\right)\cap  W^{1,q}\left((0,T),L_{\mathrm{loc}}^q(\Omega)\right)\quad\mbox{if}\,\,f(u,u_t)=|u|^p+|u_t|^q,
	\end{align*}
	and $u$ has initial data $u(0,x)=u_0(x)$, $u_t(0,x)=u_1(x)$ and satisfies the relation
	\begin{align}
	&\int\nolimits_0^T\int\nolimits_{\Omega}f(u,u_t)(t,x)\varphi(t,x)\,\mathrm{d}x\,\mathrm{d}t+\int\nolimits_{\Omega}u_1(x)\varphi(0,x)\,\mathrm{d}x-\int\nolimits_{\Omega}u_0(x)\Delta\varphi(0,x)\,\mathrm{d}x-\int\nolimits_{\Omega}u_0(x)\varphi_t(0,x)\,\mathrm{d}x\notag\\
	&=\int\nolimits_0^T\int\nolimits_{\Omega}u(t,x)\varphi_{tt}(t,x)\,\mathrm{d}x\,\mathrm{d}t+\int\nolimits_0^T\int\nolimits_{\Omega}u(t,x)\Delta\varphi_t(t,x)\,\mathrm{d}x\,\mathrm{d}t-\int\nolimits_0^T\int\nolimits_{\Omega}u(t,x)\Delta\varphi(t,x)\,\mathrm{d}x\,\mathrm{d}t,\label{eq3.2}
	\end{align}
	for any compactly supported test function $\varphi\in \ml{C}^2([0,T]\times\Omega)$ such that $\varphi(T,x)=0$ and $\varphi_t(T,x)=0$.
\end{defn}
The following lemma is crucial for the proof of Theorems \ref{a=0} and  \ref{a>0}.
\begin{lem}[Mild $\rightarrow$ Weak]\label{mildweak} Let $(u_0,u_1)\in H_0^1(\Omega)\times H^1(\Omega)$. Under the assumption \eqref{assumption1}, if $u$ is a global (in time) mild solution of \eqref{eq1}, then $u$ is a global (in time) weak solution of \eqref{eq1}.
\end{lem}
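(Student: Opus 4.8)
The plan is to show that a mild solution, which by definition satisfies the integral equation \eqref{eq3.1}, also satisfies the weak formulation \eqref{eq3.2}. The bridge is the corresponding statement for the \emph{linear inhomogeneous} problem: if I can establish that every mild solution of \eqref{eq2.4} is also a weak solution of \eqref{eq2.4}, then specializing the source term to $F(t,x)=f(u,u_t)(t,x)$ immediately yields Lemma \ref{mildweak}, since a mild solution of \eqref{eq1} is precisely a mild solution of \eqref{eq2.4} with this particular $F$. So the first thing I would do is reduce the nonlinear assertion to a purely linear one and then concentrate on proving \textbf{mild $\Rightarrow$ weak} for \eqref{eq2.4}.

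For the linear implication I would argue by density and approximation, mirroring the construction in the proof of Proposition \ref{prop2.4}. First I would verify the claim for \emph{strong} solutions: given smooth enough data $(u_0,u_1)\in(H^2(\Omega)\cap H^1_0(\Omega))^2$ and $F\in\mathcal{C}^1([0,\infty),L^2(\Omega))$, the strong solution $u\in\mathcal{C}^2([0,\infty),L^2(\Omega))$ genuinely solves the PDE in $L^2(\Omega)$, so I can multiply the equation by a compactly supported test function $\varphi\in\mathcal{C}^2([0,T]\times\Omega)$ with $\varphi(T,\cdot)=\varphi_t(T,\cdot)=0$, integrate over $(0,T)\times\Omega$, and integrate by parts. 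Integrating $u_{tt}\varphi$ by parts twice in $t$ (using the terminal conditions on $\varphi$ to kill the boundary terms at $t=T$ and producing the initial-data terms at $t=0$), and moving the two Laplacians onto $\varphi$ via Green's identity (the Dirichlet boundary condition $u=0$ on $\partial\Omega$ together with $\varphi$ being compactly supported makes all spatial boundary terms vanish) reproduces exactly the identity \eqref{eq3.2}. This step is essentially the computation that motivates the definition of weak solution in the first place, so it should be clean.

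Next I would pass from strong to mild solutions by the same density argument used in Proposition \ref{prop2.4}. I take sequences $(u_0^{(j)},u_1^{(j)})$ in $(H^2(\Omega)\cap H^1_0(\Omega))^2$ and $F^{(j)}$ in $\mathcal{C}^1([0,T_0],L^2(\Omega))$ converging to $(u_0,u_1)$ in $H^1_0(\Omega)\times H^1(\Omega)$ and to $F$ in $\mathcal{C}([0,T_0],L^2(\Omega))$, let $u^{(j)}$ be the corresponding strong solutions, and invoke \eqref{eq2.9}, which gives $u^{(j)}\to u$ in $\mathcal{C}([0,T_0],H^1_0(\Omega))\cap\mathcal{C}^1([0,T_0],H^1(\Omega))$. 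Each $u^{(j)}$ satisfies the weak identity by the previous step, so I only need to pass to the limit in each of the seven integrals in \eqref{eq3.2}. The source terms, the initial-data terms, and the $u\Delta\varphi$, $u\Delta\varphi_t$, $u\varphi_{tt}$ terms all converge because $\varphi$ has compact support (so the integrals are over a fixed compact set) and $u^{(j)}\to u$, $F^{(j)}\to F$ in the relevant norms, with convergence in $L^2$ on compacta implying convergence of the integrals against the bounded smooth coefficients coming from $\varphi$.

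The main obstacle, and the one point requiring care rather than routine estimation, is the transition to the nonlinear setting: when I set $F=f(u,u_t)$ I must confirm that this source actually lies in the function space for which the linear mild-to-weak statement was proved, and that the nonlinear integrals in \eqref{eq3.2} make sense. Here the exponent assumption \eqref{assumption1} is exactly what is needed: by the Gagliardo--Nirenberg/Sobolev embedding $H^1(\Omega)\hookrightarrow L^{2q}(\Omega)$ (respectively $H^1_0(\Omega)\hookrightarrow L^{2p}(\Omega)$) valid precisely when $p,q\le n/(n-2)$ for $n\ge3$ and for all finite $p,q$ when $n=1,2$, the regularity $u\in\mathcal{C}([0,T),H^1_0)\cap\mathcal{C}^1([0,T),H^1)$ guarantees $|u|^p,|u_t|^q\in\mathcal{C}([0,T),L^2_{\mathrm{loc}})$, and in particular $f(u,u_t)\in L^1((0,T),L^1_{\mathrm{loc}}(\Omega))$ so that the first integral in \eqref{eq3.2} is finite and the membership $u\in L^p((0,T),L^p_{\mathrm{loc}})\cap W^{1,q}((0,T),L^q_{\mathrm{loc}})$ required by the weak-solution definition holds. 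I would therefore verify these embeddings explicitly, using compactness of the support of $\varphi$ to localize and the continuity in $t$ to integrate over $[0,T]$, and then conclude that the global mild solution fulfills \eqref{eq3.2}, completing the proof.
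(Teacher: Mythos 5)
Your proposal is correct and follows essentially the same route as the paper: place $f(u,u_t)$ in $\mathcal{C}([0,T_0],L^2(\Omega))$ via Gagliardo--Nirenberg under \eqref{assumption1}, approximate the data and this source by smooth sequences, let $u^{(j)}$ be the corresponding strong solutions of \eqref{eq2.4}, note that each $u^{(j)}$ satisfies the weak identity by integration by parts, and pass to the limit in \eqref{eq3.2}. The only cosmetic difference is that the paper proves $u^{(j)}\to u$ in $\mathcal{C}([0,T_0],L^2(\Omega))$ directly from the integral representation of $u^{(j)}-u$ and the energy estimate \eqref{eq2.3}, whereas you invoke the stronger convergence \eqref{eq2.9} combined with uniqueness of mild solutions; both suffice, since passing to the limit in \eqref{eq3.2} only requires $L^2$-convergence of $u^{(j)}$ on the compact support of $\varphi$.
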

\begin{proof} We now give the proof for the case when $f(u,u_t)=|u|^p+|u_t|^q$. For the remaining case $f(u,u_t)=|u_t|^q$, one may follow the next approach to directly obtain the desired result.
	
	Let $u$ be a global mild solution of \eqref{eq1}, $T_0>0$. Let $\varphi\in \ml{C}^2([0,T_0]\times\Omega)$ be a compactly supported function carrying the properties $\varphi(T_0,x)=0$ and $\varphi_t(T_0,x)=0$.
	
	It follows from the Gagliardo-Nirenberg inequality, under the assumption \eqref{assumption1}, that
	\begin{align}\label{eq3.6}
	\|f(u,u_t)(t,\cdot)\|_{L^2}&\leqslant  \|u(t,\cdot)\|^{p}_{L^{2p}} +\|u_t(t,\cdot)\|^{q}_{L^{2q}}\notag\\
	&\leqslant C\|\nabla u(t,\cdot)\|^{\sigma_1 p}_{L^2}\| u(t,\cdot)\|^{(1-\sigma_1) p}_{L^2}+ C\|\nabla u_t(t,\cdot)\|^{\sigma_2 q}_{L^2}\| u_t(t,\cdot)\|^{(1-\sigma_2) q}_{L^2}\notag\\
	&\leqslant C\|u\|^p_{\ml{C}([0,T_0],H_0^1(\Omega))}+C\|u\|^q_{\ml{C}^1([0,T_0],H^1(\Omega))},
	\end{align}
	where $\sigma_1=n(p-1)/(2p)\in(0,1]$ and $\sigma_2=n(q-1)/(2q)\in(0,1]$. This shows that $f(u,u_t)\in \ml{C}\left([0,T_0],L^2(\Omega)\right)$.
	
	Thanks to the density argument, there exist sequences 
	\begin{align*}
	\left\{\left(u_0^{(j)},u_1^{(j)}\right)\right\}_{j=1}^\infty\subseteq \left(H^2(\Omega)\cap H^1_0(\Omega)\right)^2,
	\end{align*}
	\begin{align*}
	\left\{F^{(j)}\right\}_{j=1}^\infty\subseteq \ml{C}\left([0,T_0],H^2(\Omega)\cap H^1_0(\Omega)\right)\cap \ml{C}^1\left([0,T_0],L^2(\Omega)\right)
	\end{align*}
	such that
	\begin{align*}
	\lim_{j\rightarrow\infty}\left(u_0^{(j)},u_1^{(j)}\right)=(u_0,u_1)\;\;\mbox{in}\;\; H_0^1(\Omega)\times H^1(\Omega)\quad\mbox{and}\quad \lim_{j\rightarrow\infty}F^{(j)}=f(u,u_t)\;\;\mbox{in}\;\; \ml{C}\left([0,T_0],L^2(\Omega)\right).
	\end{align*}
	Using Proposition \ref{prop2.2}, let $u^{(j)}$ be the strong solution of the linear inhomogeneous
	equation \eqref{eq2.4} with initial data $\left(u_0^{(j)},u_1^{(j)}\right)$
	and the inhomogeneous term $F^{(j)}(t,x)$. Using Proposition \ref{prop2.3} to $u^{(j)}$ and knowing the fact that $u$ is a mild solution of \eqref{eq1}, one may derive
	\begin{align*}
	u^{(j)}(t,x)-u(t,x)=R(t)\left(u^{(j)}_0-u_0,u^{(j)}_1-u_1\right)(x)+\int\nolimits_0^tS(t-s)\left(F^{(j)}(s,x)-f(u,u_t)(s,x)\right)\mathrm{d}s
	\end{align*}
	and hence, by using \eqref{eq2.3} in Proposition \ref{prop2.1}, it shows
	\begin{align*}
	&\left\|\left(u^{(j)}-u\right)(t,\cdot)\right\|_{L^2}\\
	&\leqslant \left\| R(t)\left(u^{(j)}_0-u_0,u^{(j)}_1-u_1\right)(\cdot)\right\|_{L^2}+\int\nolimits_0^t\left\|R(t-s)\left(0,F^{(j)}(s,\cdot)-f(u,u_t)(s,\cdot)\right)\right\|_{L^2}\,\mathrm{d}s\\
	&\leqslant C\left\|u^{(j)}_0-u_0\right\|_{L^2}+T_0\left\|\left(\nabla\left(u^{(j)}_0-u_0\right),u^{(j)}_1-u_1\right)\right\|_{L^2\times L^2}+T_0\int\nolimits_0^t\left\|F^{(j)}(s,\cdot)-f(u,u_t)(s,\cdot)\right\|_{L^2}\,\mathrm{d}s\\
	&\leqslant C(1+T_0)\left\|\left(u^{(j)}_0-u_0,u^{(j)}_1-u_1\right)\right\|_{H^1_0\times L^2}+T^2_0\sup_{s\in[0,T_0]}\left\|F^{(j)}(s,\cdot)-f(u,u_t)(s,\cdot)\right\|_{L^2},
	\end{align*}
	which implies, by letting $j\rightarrow\infty$, that 
	\begin{align*}
	u^{(j)}\longrightarrow u \;\;\mbox{in}\;\; \ml{C}\left([0,T_0],L^2(\Omega)\right).
	\end{align*}
	Moreover, due to the fact that $u^{(j)}$ is a strong solution of \eqref{eq2.4}, $u^{(j)}$ is also a weak solution of \eqref{eq2.4}, i.e., satisfies \eqref{eq2.6}.
	Thus, letting $j\rightarrow\infty$, we may deduce that $u$ satisfies the formulation \eqref{eq3.2}. Since $\varphi$ is an arbitrary test function, we claim that $u$ is a weak solution of \eqref{eq1}.
\end{proof}

Let us give the proof of Proposition \ref{prop1}.

\begin{proof}[Proof of Proposition \ref{prop1}] Similarly, we now give the proof for the case when $f(u,u_t)=|u|^p+|u_t|^q$. For the rest case $f(u,u_t)=|u_t|^q$, one may follow the next progress to immediately derive the corresponding desired result.
	
	Let $T>0$ and $R>0$. We now define the family of evolution space
	\begin{align*}
	Y_R(T):=\left\{v\in X(T):=\ml{C}\left([0,T],H_0^1(\Omega)\right)\cap \ml{C}^1\left([0,T],H^1(\Omega)\right)\quad \mbox{such that}\quad\|v\|_{X(T)}\leqslant 2R\right\},
	\end{align*}
	carrying
	\begin{align*}
	\|v\|_{X(T)}:=\sup_{t\in[0,T]}\left(\|v_t(t,\cdot)\|_{H^1}+\|v(t,\cdot)\|_{H^1}\right).
	\end{align*}
	As \eqref{eq3.6}, the application of the Gagliardo-Nirenberg inequality leads to
	\begin{align*}
	v\in Y_R(T)\rightarrow f(v,v_t)=|v|^p+|v_t|^q\in \ml{C}\left([0,T],L^2(\Omega)\right),
	\end{align*}
	which allow us by using Proposition \ref{prop2.4} defines a mapping 
	\begin{align*}
	\Phi:\, Y_R(T)\rightarrow X(T)
	\end{align*}
	such that $u(t,x)=\Phi(v)(t,x)$ is the unique mild solution to the linear inhomogeneous equation 
	\begin{equation*}
	\begin{cases}
	u_{tt}-\Delta
	u -\Delta u_t =f(v,v_t), &x\in {\Omega},\,t>0,\\
	u(0,x)=  u_0(x),\;u_t(0,x)=  u_1(x),&x\in {\Omega},\\
	u=0,&x\in {\partial\Omega},\,t>0.
	\end{cases}
	\end{equation*}
	Then, we also find
	\begin{align}
	&\|(u_t,\nabla u)(t,\cdot)\|_{L^2\times L^2}\leqslant C\|(u_1,\nabla
	u_0)\|_{L^2\times L^2}+C\int\nolimits_0^t\|f(v,v_t)(s,\cdot)\|_{L^2}\,\mathrm{d}s,\label{eq3.3}\\
	&\|u(t,\cdot)\|_{L^2}\leqslant \|u_0\|_{L^2}+C\int\nolimits_0^t\left(\|(u_1,\nabla
	u_0)\|_{L^2\times
		L^2}+\int\nolimits_0^s\|f(v,v_t)(\tau,\cdot)\|_{L^2}\,\mathrm{d}\tau\right)\mathrm{d}s,\label{eq3.4}\\
	&\|\nabla u_t(t,\cdot)\|^2_{L^2}\leqslant  C\|(\nabla
	u_0,u_1)\|^2_{L^2\times H^1}+C\int\nolimits_0^t\|f(v,v_t)(s,\cdot)\|_{L^2}^2\,\mathrm{d}s +C\|\nabla u(t,\cdot)\|^2_{L^2}\notag\\
	&\qquad\qquad\qquad\quad+C\int\nolimits_0^t \|f(v,v_t)(s,\cdot)\|_{L^2}\|u_s(s,\cdot)\|_{L^2}\,\mathrm{d}s.\label{eq3.5}
	\end{align}
	
	Next, we divide the proof into two steps to derive our result.
	
	\noindent$\bullet$  \underline{Step 1. Let us prove  $\Phi:\,Y_R(T)\rightarrow Y_R(T)$.}
	\medskip
	
	\noindent Let us consider $v\in Y_R(T)$ and $u=\Phi(v)$. Using \eqref{eq3.6}, one has
	\begin{align*}
	\|f(v,v_t)(s,\cdot)\|_{L^2}\leqslant C\,\|v\|^p_{X(T)}+C\,\|v\|^q_{X(T)}\leqslant C\,2^pR^p+ C\,2^qR^q,
	\end{align*}
	therefore, by using the inequalities \eqref{eq3.3}-\eqref{eq3.5}, we infer that
	\begin{align*}
	\|(u_t,\nabla u)(t,\cdot)\|_{L^2\times L^2}\leqslant C\,I_0+C\,2^p R^p T+C\,2^q R^q T,
	\end{align*}
	\begin{align*}
	\|u(t,\cdot)\|_{L^2}&\leqslant  \|u_0\|_{L^2}+C\int\nolimits_0^t\left(\|(u_1,\nabla u_0)\|_{L^2\times L^2}+C\,2^p R^p T+C\,2^q R^q T\right)\mathrm{d}s\\
	&\leqslant C(1+T)I_0+C\,2^p R^p T^2+C\,2^q R^q T^2\leqslant CI_0+C\,2^p R^p T+C\,2^q R^q T
	\end{align*}
	and
	\begin{align*}
	\|\nabla u_t(t,\cdot)\|^2_{L^2}&\leqslant C\,I^2_0+C\,2^{2p} R^{2p} T +C\,2^{2q} R^{2q} T +C\,I^2_0+C\,2^{2p} R^{2p} T^2\\
	&\quad+C\,2^{2q} R^{2q} T^2+C\left(2^{p} R^{p}+2^{q} R^{q}\right)\left(I_0+2^{p} R^{p} T+2^{q} R^{q} T\right)T\\
	&\leqslant C\,I^2_0+C\,2^{2p} R^{2p} T +C\,2^{2q} R^{2q} T +C\left(I_0+2^p R^p T+2^q R^q T\right)\left(2^p R^p+2^q R^q\right) T,
	\end{align*}
	where $I_0:=\|(u_0,u_1)\|_{H^1\times H^1}$, and $T\ll1$. Therefore, for any large constant $R$, we may choose sufficiently small constant $T$ such that $\|u\|_{X(T)}\leqslant 2R$. This proves that $\Phi$ is a mapping from $Y_R(T)$ to $Y_R(T)$.\\
	\medskip
	
	\noindent $\bullet$ \underline{Step 2. Let us prove $\Phi$ is a contraction.}
	\medskip
	
	\noindent Let $v,\overline{v}\in Y_R(T)$, $u:=\Phi(v)$ and $\overline{u}:=\Phi(\overline{v})$. Additionally, we define a new variable 
	\begin{align*}
	w:=u-\overline{u}.
	\end{align*} According to Proposition \ref{prop2.4}, the function $w$ is the unique mild solution to the linear inhomogeneous equation 
	\begin{equation*}
	\begin{cases}
	w_{tt}-\Delta
	w -\Delta w_t =f(v,v_t)-f(\overline{v},\overline{v}_t), &x\in {\Omega},\,t>0,\\
	w(0,x)= 0,\;w_t(0,x)= 0,&x\in {\Omega},\\
	w=0,&x\in {\partial\Omega},\,t>0,
	\end{cases}
	\end{equation*}
	and the following energy estimates hold:
	\begin{align*}
	&\|(w_t,\nabla w)(t,\cdot)\|_{L^2\times L^2}\leqslant C\int\nolimits_0^t\|f(v,v_t)(s,\cdot)-f(\overline{v},\overline{v}_t)(s,\cdot)\|_{L^2}\,\mathrm{d}s,\\
	&\|w(t,\cdot)\|_{L^2}\leqslant C\int\nolimits_0^t\int\nolimits_0^s\|f(v,v_t)(\tau,\cdot)-f(\overline{v},\overline{v}_t)(\tau,\cdot)\|_{L^2}\,\mathrm{d}\tau\,\mathrm{d}s,
	\end{align*}
	and
	\begin{align*}
	\|\nabla w_t(t,\cdot)\|^2_{L^2}&\leqslant  C\int\nolimits_0^t\|f(v,v_t)(s,\cdot)-f(\overline{v},\overline{v}_t)(s,\cdot)\|_{L^2}^2\,\mathrm{d}s +C\|\nabla w(t,\cdot)\|^2_{L^2}\\
	&\quad+C\int\nolimits_0^t \|f(v,v_t)(s,\cdot)-f(\overline{v},\overline{v}_t)(s,\cdot)\|_{L^2}\|w_s(s,\cdot)\|_{L^2}\,\mathrm{d}s.
	\end{align*}
	Using H\"{o}lder's inequality, Sobolev's embeddings $H_0^1(\Omega),H^1(\Omega)\hookrightarrow L^{2r}(\Omega)$ for $r>1$, and the following well-known inequality:
	\begin{align}\label{estimationimp}
	\left||x|^{r}-|y|^{r}\right|\leqslant C(r)|x-y|\left(|x|^{r-1}+|y|^{r-1}\right),\quad x,y\in\mathbb{R},\,r>1,
	\end{align}
	we conclude for all $t>0$ that
	\begin{align*}
	&\left\|f(v,v_t)(t,\cdot)-f(\overline{v},\overline{v}_t)(t,\cdot)\right\|_{L^2}\\
	&\leqslant \left\||v(t,\cdot)|^p-|\overline{v}(t,\cdot)|^p\right\|_{L^2}+ \left\||v_t(t,\cdot)|^q-|\overline{v}_t(t,\cdot)|^q\right\|_{L^2}\\
	&\leqslant C \left\||v(t,\cdot)-\overline{v}(t,\cdot)|\left(|v(t,\cdot)|^{p-1}+|\overline{v}(t,\cdot)|^{p-1}\right)\right\|_{L^2}\\
	&\quad+ \left\||v_t(t,\cdot)-\overline{v}_t(t,\cdot)|\left(|v_t(t,\cdot)|^{q-1}+|\overline{v}_t(t,\cdot)|^{q-1}\right)\right\|_{L^2}\\
	&\leqslant C \left\|v(t,\cdot)-\overline{v}(t,\cdot)\right\|_{L^{2p}}\left\||v(t,\cdot)|^{p-1}+|\overline{v}(t,\cdot)|^{p-1}\right\|_{L^{2p/(p-1)}}\\
	&\quad+ \left\|v_t(t,\cdot)-\overline{v}_t(t,\cdot)\right\|_{L^{2q}}\left\||v_t(t,\cdot)|^{q-1}+|\overline{v}_t(t,\cdot)|^{q-1}\right\|_{L^{2q/(q-1)}}\\
	&\leqslant C \left\|v(t,\cdot)-\overline{v}(t,\cdot)\right\|_{L^{2p}}\left(\|v(t,\cdot)\|_{L^{2p}}^{p-1}+\|\overline{v}(t,\cdot)\|_{L^{2p}}^{p-1}\right)\\
	&\quad+ \left\|v_t(t,\cdot)-\overline{v}_t(t,\cdot)\right\|_{L^{2q}}\left(\|v_t(t,\cdot)\|_{L^{2q}}^{q-1}+\|\overline{v}_t(t,\cdot)\|_{L^{2q}}^{q-1}\right)\\
	&\leqslant C \left\|v(t,\cdot)-\overline{v}(t,\cdot)\right\|_{H_0^1}\left(\|v(t,\cdot)\|_{H_0^1}^{p-1}+\|\overline{v}(t,\cdot)\|_{H_0^1}^{p-1}\right)\\
	&\quad+ \left\|v_t(t,\cdot)-\overline{v}_t(t,\cdot)\right\|_{H^1}\left(\|v_t(t,\cdot)\|_{H^1}^{q-1}+\|\overline{v}_t(t,\cdot)\|_{H^1}^{q-1}\right)\\
	&\leqslant C \left\|v-\overline{v}\right\|_{X(T)}\left(\|v\|_{X(T)}^{p-1}+\|\overline{v}\|_{X(T)}^{p-1}+\|v\|_{X(T)}^{q-1}+\|\overline{v}\|_{X(T)}^{q-1}\right)\\
	&\leqslant C \left(2^pR^{p-1}+2^qR^{q-1}\right) \|v-\overline{v}\|_{X(T)}.
	\end{align*}
	Therefore, similarly as the above and by choosing sufficiently small constant $T$ for any large constant $R$, we may conclude that 
	\begin{align*}
	\|w\|_{X(T)}\leqslant  \frac{1}{2}\|v-\overline{v}\|_{X(T)}.
	\end{align*}
	This implies that $\Phi$ is a contraction mapping. Then, according to the Banach fixed-point
	theorem, there exists a unique mild solution $u\in X(T)$ to problem \eqref{eq1}.
	
	Moreover, by uniqueness, there exists a maximal interval $\left[0,T_{\max}\right)$, where
	\begin{align*}
	T_{\max}:=\sup\left\{T>0:\;\mbox{there exist a mild solution $u\in X(T)$
		to \eqref{eq1}}\right\}\leqslant +\infty.
	\end{align*}
	
	Finally, if the lifespan $T_{\max}$ is finite, then the energy of the solution blows up at $T_{\max}$ such that
	\begin{align*}
	\lim_{t\rightarrow T_{\max}}\left(\|u(t,\cdot)\|_{H_0^1}+\|u_t(t,\cdot)\|_{H^1}\right)=\infty.
	\end{align*}
	Indeed, providing that
	\begin{align*}
	\lim_{t\rightarrow T_{\max}}\left(\|u(t,\cdot)\|_{H_0^1}+\|u_t(t,\cdot)\|_{H^1}\right)=:M<\infty,
	\end{align*}
	then there exists a time sequence $\{t_m\}_{m\geqslant 0}$ tending to $T_{\max}$ as $m\rightarrow\infty$ and such that
	\begin{align*}
	\sup_{m\in\mathbb{N}}\left(\|u(t_m,\cdot)\|_{H_0^1}+\|u_t(t_m,\cdot)\|_{H^1}\right)\leqslant M+1.
	\end{align*}
	The argument before shows that there exists $T(M + 1) > 0$ such that the solution
	$u(\cdot,x)$ can be extended on the interval $[t_m, t_m + T(M + 1)]$ for any $m$. By taking
	$m$ sufficiently large so that $t_m \geqslant T_{\max} - (1/2)T(M + 1)$, the solution $u(\cdot,x)$ can be
	extended on $[T_{\max}, T_{\max} + (1/2)T(M + 1)]$. This contradicts the definition of $T_{\max}$. Thus, we complete the proof.
\end{proof}


\section{Blow-up of solutions}\label{sec4}
This section is devoted to prove the blow-up results for \eqref{eq1}, namely, Theorems \ref{a=0} and \ref{a>0}. The main approach of the proof is based on the variational formulation of the weak solution by choosing the appropriate test functions. Note that the harmonic functions introduced in Lemmas \ref{lemma1}, \ref{lemma4} and \ref{lemma2} play a crucial role in an exterior domain, because of their asymptotic behaviors and the value vanishing on the boundary $\partial\Omega$.

\begin{proof}[Proof of Theorem \ref{a=0}]
	We argue by a contradiction that assuming that $u$ is 
	a global (in time) solution of \eqref{eq1}. It immediately shows the following relation:
	\begin{align}\label{newweaksolution}
	&\int\nolimits_0^T\int\nolimits_{\Omega}|u_t(t,x)|^{q}\varphi(t,x)\,\mathrm{d}x\,\mathrm{d}t+\int\nolimits_{\Omega}u_1(x)\varphi(0,x)\,\mathrm{d}x- \int\nolimits_{\Omega} u_0(x)\Delta\varphi(0,x)\,\mathrm{d}x-\int\nolimits_{\Omega}u_0(x)\varphi_t(0,x)\,\mathrm{d}x\notag\\
	&=\int\nolimits_0^T\int\nolimits_{\Omega}u(t,x)\varphi_{tt}(t,x)\,\mathrm{d}x\,\mathrm{d}t+\int\nolimits_0^T\int\nolimits_{\Omega}u(t,x)\Delta\varphi_t(t,x)\,\mathrm{d}x\,\mathrm{d}t-\int\nolimits_0^T\int\nolimits_{\Omega}u(t,x)\Delta\varphi(t,x)\,\mathrm{d}x\,\mathrm{d}t
	\end{align}
	for all $T>0$ and all compactly supported function $\varphi\in \ml{C}^2([0,T]\times\Omega)$ 
	such that $\varphi(T,x)=0$ and $\varphi_t(T,x)=0$ for all $x\in\Omega$. \\
	Let us take a test function
	\begin{align*}
	\varphi(t,x):=\phi_0(x)\varphi^\ell_T(x)\eta_T^{k}(t)
	\end{align*} with $\ell, k\gg1$, where $\phi_0$ is the harmonic function introduced in Lemmas \ref{lemma1}, \ref{lemma4} and \ref{lemma2}. (It depends on different dimensions) On one hand, $\eta_T(t):=\eta(t/T)$, where $\eta\in \ml{C}^\infty([0,\infty])$ is a non-increasing cut-off function such that
	\begin{equation*}
	\eta(t):=\begin{cases}
	1&\mbox{if }0\leqslant t\leqslant 1/2\\
	0&\mbox {if } t\geqslant 1,
	\end{cases}
	\end{equation*}
	carrying $0\leqslant \eta(t) \leqslant 1$ and $|\eta'(t)|\leqslant C$ for some constants $C>0$ and all $t>0$. On the other hand, $\varphi_T(x)=\Phi(|x|/T)$ with the following smooth, non-increasing cut-off function:
	\begin{equation*}
	\Phi(r):=\begin{cases}
	1&\mbox{if }0\leqslant r\leqslant 1,\\
	0&\mbox {if } r\geqslant 2,
	\end{cases}
	\end{equation*}
	such that $0\leqslant\Phi(r)\leqslant 1$, $|\Phi'(r)|\leqslant C/r$ and $|\Phi''(r)|\leqslant C/r^2$ with some constants $C>0$. Finally, let us define an additional test function $\Psi_T=\Psi_T(t)$ such that
	\begin{align*}
	\Psi_T(t):=\int\nolimits_t^{\infty}\eta^k_T(\tau)\,\mathrm{d}\tau.
	\end{align*}
	This test function has properties $\Psi_T'(t)=-\eta^k_T(t)$, and $\hbox{supp }\Psi_T\subseteq[0,T]$.
	
	Making use of the properties of these test function, we may derive
	\begin{align}\label{weaksolution2}
	&\int\nolimits_0^T\int\nolimits_{\Omega_1}|u_t(t,x)|^{q}\varphi(t,x)\,\mathrm{d}x\,\mathrm{d}t+\int\nolimits_{\Omega_1}u_0(x)\Psi_T(0)\Delta\left(\phi_0(x)\varphi_T^\ell(x)\right)\mathrm{d}x+\int\nolimits_{\Omega_1}u_1(x)\phi_0(x)\varphi_T^\ell(x)\,\mathrm{d}x\notag\\
	&=-\int\nolimits_0^T\int\nolimits_{\Omega_1}u_t(t,x)\phi_0(x)\varphi_T^\ell(x)\partial_t\left(\eta^k_T(t)\right)\mathrm{d}x\,\mathrm{d}t-\int\nolimits_0^T\int\nolimits_{\Omega_1}u_t(t,x)\Delta\left(\phi_0(x)\varphi_T^\ell(x)\right)\eta^k_T(t)\,\mathrm{d}x\,\mathrm{d}t\notag\\
	&\quad-\int\nolimits_0^T\int\nolimits_{\Omega_1}u_t(t,x)\Delta\left(\phi_0(x)\varphi_T^\ell(x)\right)\Psi_T(t)\,\mathrm{d}x\,\mathrm{d}t\notag\\
	&=:I_1+I_2+I_3
	\end{align}
	where $\Omega_1:=\{x\in\Omega:\;|x|\leqslant 2T\}$. At this stage, we have to distinguishes three cases such that $n\geqslant 3$, $n=2$ and $n=1$. In each case, we will apply different asymptotic properties of the harmonic function $\phi_0(x)$.
	
	\medskip
	
	\noindent $\bullet$ \underline{Proof of blow-up for $n\geqslant 3$.}
	\medskip
	
	\noindent In order to estimate the right-hand side of \eqref{weaksolution2}, we introduce the term 
	$\varphi^{1/q}\varphi^{-1/q}$ in $I_1$, and we use Young's inequality to obtain
	\begin{align}\label{I1}
	I_1
	\leqslant\frac{1}{6} \int\nolimits_0^T\int\nolimits_{\Omega_1}|u_t(t,x)|^q\varphi(t,x) \,\mathrm{d}x\,\mathrm{d}t+C\int\nolimits_0^T\int\nolimits_{\Omega_1}\phi_0(x)\varphi_T^\ell(x)\eta_T^{(k-1)q'}(t)\left|\partial_t \eta_T(t)\right|^{q'}\mathrm{d}x\,\mathrm{d}t.
	\end{align}
	Let us consider Lemma \ref{lemma1} with all properties of $\phi_0$, $T\gg1$, and Young's inequality, which deduce
	\begin{align}\label{I2}
	I_2
	&\leqslant\frac{1}{6} \int\nolimits_0^T\int\nolimits_{\Omega_1}|u_t(t,x)|^q\varphi (t,x)\,\mathrm{d}x\,\mathrm{d}t+C\int\nolimits_0^T\int\nolimits_{\nabla\Omega_1}\varphi_T^{\ell-q'}(x)\eta_T^{k}(t)|\nabla\phi_0(x)|^{q'}\left|\nabla\varphi_T(x)\right|^{q'}\mathrm{d}x\,\mathrm{d}t\notag\\
	&\quad+\,C\int\nolimits_0^T\int\nolimits_{\nabla\Omega_1}\varphi_T^{\ell-2q'}(x)\eta_T^{k}(t)
	\left|\nabla\varphi_T(x)\right|^{2q'}\mathrm{d}x\,\mathrm{d}t\notag\\
	&\quad+\,C\int\nolimits_0^T\int\nolimits_{\nabla\Omega_1}\varphi_T^{\ell-q'}(x)\eta_T^{k}(t)
	\left|\Delta\varphi_T(x)\right|^{q'}\mathrm{d}x\,\mathrm{d}t,
	\end{align}
	where $\nabla\Omega_1:=\{x\in\Omega:\;T\leqslant |x|\leqslant 2T\}$. Similarly,
	\begin{align}\label{I3}
	I_3&\leqslant\frac{1}{6} \int\nolimits_0^T\int\nolimits_{\Omega_1}|u_t(t,x)|^q\varphi(t,x) \,\mathrm{d}x\,\mathrm{d}t\notag\\
	&\quad+C\int\nolimits_0^T\int\nolimits_{\nabla\Omega_1}\varphi_T^{\ell-q'}(x)\eta^{-kq'/q}_T(t)\Psi^{q'}_T(t)|\nabla\phi_0(x)|^{q'}\left|\nabla\varphi_T(x)\right|^{q'}\mathrm{d}x\,\mathrm{d}t\notag\\
	&\quad+\,C\int\nolimits_0^T\int\nolimits_{\nabla\Omega_1}\varphi_T^{\ell-2q'}(x)\eta^{-kq'/q}_T(t)\Psi^{q'}_T(t)
	\left|\nabla\varphi_T(x)\right|^{2q'}\mathrm{d}x\,\mathrm{d}t\notag\\
	&\quad+\,C\int\nolimits_0^T\int\nolimits_{\nabla\Omega_1}\varphi_T^{\ell-q'}(x)\eta^{-kq'/q}_T(t)\Psi^{q'}_T(t)
	\left|\Delta\varphi_T(x)\right|^{q'}\mathrm{d}x\,\mathrm{d}t.
	\end{align}
	Using \eqref{I1}-\eqref{I3}, it follows from \eqref{weaksolution2} that
	\begin{align}\label{weaksolution3}
	&\frac{1}{2}\int\nolimits_0^T\int\nolimits_{\Omega_1}|u_t(t,x)|^{q}\varphi(t,x)\,\mathrm{d}x\,\mathrm{d}t+\int\nolimits_{\Omega_1}u_0(x)\Psi_T(0)\Delta\left(\phi_0(x)\varphi_T^\ell(x)\right)\mathrm{d}x+\int\nolimits_{\Omega_1}u_1(x)\phi_0(x)\varphi_T^\ell(x)\,\mathrm{d}x\notag\\
	& \leqslant C\int\nolimits_0^T\int\nolimits_{\Omega_1}\phi_0(x)\varphi_T^\ell(x)\eta_T^{(k-1)q'}(t)\left|\partial_t \eta_T(t)\right|^{q'}\mathrm{d}x\,\mathrm{d}t\notag\\
	&\quad+\,C\int\nolimits_0^T\int\nolimits_{\nabla\Omega_1}\varphi_T^{\ell-q'}(x)\eta_T^{k}(t)|\nabla\phi_0(x)|^{q'}\left|\nabla\varphi_T(x)\right|^{q'}\mathrm{d}x\,\mathrm{d}t\notag\\
	&\quad+\;C\int\nolimits_0^T\int\nolimits_{\nabla\Omega_1}\varphi_T^{\ell-2q'}(x)\eta_T^{k}(t)
	\left|\nabla\varphi_T(x)\right|^{2q'}\mathrm{d}x\,\mathrm{d}t+\,C\int\nolimits_0^T\int\nolimits_{\nabla\Omega_1}\varphi_T^{\ell-q'}(x)\eta_T^{k}(t)
	\left|\Delta\varphi_T(x)\right|^{q'}\mathrm{d}x\,\mathrm{d}t\notag\\
	&\quad+\,C\int\nolimits_0^T\int\nolimits_{\nabla\Omega_1}\varphi_T^{\ell-q'}(x)\eta^{-kq'/q}_T(t)\Psi^{q'}_T(t)|\nabla\phi_0(x)|^{q'}\left|\nabla\varphi_T(x)\right|^{q'}\mathrm{d}x\,\mathrm{d}t\notag\\
	&\quad+\,C\int\nolimits_0^T\int\nolimits_{\nabla\Omega_1}\varphi_T^{\ell-2q'}(x)\eta^{-kq'/q}_T(t)\Psi^{q'}_T(t)
	\left|\nabla\varphi_T(x)\right|^{2q'}\mathrm{d}x\,\mathrm{d}t\notag\\
	&\quad+\,C\int\nolimits_0^T\int\nolimits_{\nabla\Omega_1}\varphi_T^{\ell-q'}(x)\eta^{-kq'/q}_T(t)\Psi^{q'}_T(t)
	\left|\Delta\varphi_T(x)\right|^{q'}\mathrm{d}x\,\mathrm{d}t,
	\end{align}
	We now divide the discussion into two cases.
	
	For the case when $1<q<1+1/n$, by Lemma \ref{lemma1} we know
	\begin{align*}
	|\nabla\phi_0(x)|\leqslant C|x|^{1-n}\leqslant CT^{1-n}\leqslant  C T^{-1}
	\end{align*}
	for $x\in\nabla\Omega_1$. Therefore, using the fact that 
	\begin{align*}
	\eta^{-kq'/q}_T(t)\Psi^{q'}_T(t)\leqslant \eta^{-kq'/q}_T(t)\left(\int\nolimits_t^T\eta^{k}_T(\tau)\,\mathrm{d}\tau\right)^{q'}\leqslant 
	T^{q'} \eta^{k}_T(t)\leqslant T^{q'}
	\end{align*}
	and the change of variables such that $y=T^{-1}x,$ $s=T^{-1}t$, we get from \eqref{weaksolution3} that 
	\begin{align}\label{weaksolution4}
	&\int\nolimits_{\Omega_1}u_0(x)\Psi_T(0)\Delta\left(\phi_0(x)\varphi_T^\ell(x)\right)\mathrm{d}x+\int\nolimits_{\Omega_1}u_1(x)\phi_0(x)\varphi_T^\ell(x)\,\mathrm{d}x\nonumber\\
	& \leqslant C\;T^{-q'+1+n}\int\nolimits_0^1\int\nolimits_{|y|\leqslant 2}\Phi^\ell(|y|)\eta^{(k-1)q'}(s)\left| \eta'(s)\right|^{q'}\mathrm{d}y\,\mathrm{d}s\nonumber\\
	&\quad +\,C\;T^{-2q'+1+n}\int\nolimits_0^1\int\nolimits_{1\leqslant |y|\leqslant 2}\Phi^{\ell-q'}(|y|)\eta^{k}(s)\left(\left|\nabla_y\Phi(|y|)\right|^{q'}+\left|\Delta_y\Phi(|y|)\right|^{q'}\right)\mathrm{d}y\,\mathrm{d}s\nonumber\\
	&\quad+\,C\;T^{-2q'+1+n}\int\nolimits_0^1\int\nolimits_{1\leqslant |y|\leqslant 2}\Phi^{\ell-2q'}(|y|)\eta^{k}(s)\left|\nabla_y\Phi(|y|)\right|^{2q'}\mathrm{d}y\,\mathrm{d}s\nonumber\\
	&\quad+\,C\;T^{-q'+1+n}\int\nolimits_0^1\int\nolimits_{1\leqslant |y|\leqslant 2}\Phi^{\ell-q'}(|y|)\left(\left|\nabla_y\Phi(|y|)\right|^{q'}+\left|\Delta_y\Phi(|y|)\right|^{q'}\right)\mathrm{d}y\,\mathrm{d}s\nonumber\\
	&\quad+\,C\;T^{-q'+1+n}\int\nolimits_0^1\int\nolimits_{1\leqslant |y|\leqslant 2}\Phi^{\ell-2q'}(|y|)
	\left|\nabla_y\Phi(|y|)\right|^{2q'}\mathrm{d}y\,\mathrm{d}s\nonumber\\
	&\leqslant \,C\;T^{-q'+1+n},
	\end{align}
	where the constant $C$ is independent of $T$. Since $q<1+1/n$, it follows by letting $T\rightarrow\infty$ that
	\begin{align*}
	\lim_{T\rightarrow\infty}\int\nolimits_{\nabla\Omega_1}u_0(x)\Psi_T(0)\Delta\left(\phi_0(x)\varphi_T^\ell(x)\right)\mathrm{d}x+\lim_{T\rightarrow\infty}\int\nolimits_{\Omega_1}u_1(x)\phi_0(x)\varphi_T^\ell(x)\,\mathrm{d}x\leqslant 0.
	\end{align*}
	On the other hand, since $\Psi_T(0)\leqslant C\,T$, $|\nabla\varphi_T^{\ell}(x)|\leqslant C/T$, $|\Delta\varphi_T(x)|\leqslant C/T^2$, and $|\nabla\phi_0(x)|\leqslant C/T$ in $\nabla\Omega_1$, we may observe that
	\begin{align*}
	\left|\Psi_T(0)\Delta\left(\phi_0(x)\varphi_T^\ell(x)\right)\right|\leqslant\frac{C}{T}
	\end{align*}
	for $x\in\nabla\Omega_1$. Moreover, according to $(u_0,u_1\phi_0)\in L^1(\Omega)\times L^1(\Omega)$, it follows by Lebesgue's dominated convergence theorem that
	\begin{align*}
	0<\int\nolimits_{\Omega}u_1(x)\phi_0(x)\,\mathrm{d}x\leqslant0,
	\end{align*}
	which leads to a contradiction.
	\medskip
	
	Let us consider the case $q=1+1/n$. From \eqref{weaksolution3}, there exists a positive constant $D$ independent of $T$ such that
	\begin{align*}
	\int\nolimits_0^{T}\int\nolimits_{\Omega_1}|u_t(t,x)|^q\varphi(t,x) \,\mathrm{d}x\,\mathrm{d}t\leqslant D\quad\mbox{for all}\;\; T>0,
	\end{align*}
	which implies that
	\begin{align}\label{lefthand}
	\int\nolimits_{T/2}^{T}\int\nolimits_{\Omega_1}|u_t(t,x)|^q\varphi(t,x) \,\mathrm{d}x\,\mathrm{d}t\longrightarrow 0\quad\mbox{and}\quad\int\nolimits_{0}^{T}\int\nolimits_{\nabla\Omega_1}|u_t(t,x)|^q\varphi(t,x) \,\mathrm{d}x\,\mathrm{d}t\longrightarrow 0,
	\end{align}
	as $T\rightarrow\infty$. On the other hand, we use H\"older's inequality instead of Young's one in $I_1$, $I_2$, and $I_3$, together with the same change of variables, we get
	\begin{align}\label{newI1}
	I_1&\leqslant\left(\int\nolimits_{T/2}^T\int\nolimits_{\Omega_1}|u_t(t,x)|^q\varphi(t,x) \,\mathrm{d}x\,\mathrm{d}t\right)^{1/p}\left(C\int\nolimits_0^T\int\nolimits_{\Omega_1}\phi_0(x)\varphi_T^\ell(x)\eta_T^{(k-1)q'}(t)\left|\partial_t \eta_T(t)\right|^{q'}\mathrm{d}x\,\mathrm{d}t\right)^{1/q'}\notag\\
	&\leqslant C\;T^{-1+\frac{1+n}{q'}}\left(\int\nolimits_{T/2}^T\int\nolimits_{\Omega_1}|u_t(t,x)|^q\varphi (t,x) \,\mathrm{d}x\,\mathrm{d}t\right)^{1/q}\notag\\
	&=C\;\left(\int\nolimits_{T/2}^T\int\nolimits_{\Omega_1}|u_t(t,x)|^q\varphi(t,x) \,\mathrm{d}x\,\mathrm{d}t\right)^{1/q},
	\end{align}
	thanks to $q=1+1/n$. Similarly, one gets 
	\begin{align}\label{newI2}
	I_2,\,I_3\leqslant C\;\left(\int\nolimits_{0}^T\int\nolimits_{\nabla\Omega_1}|u_t(t,x)|^q\;\varphi(t,x) \,\mathrm{d}x\,\mathrm{d}t\right)^{1/q}.
	\end{align}
	Finally, using \eqref{newI1} and \eqref{newI2}, it leads from \eqref{weaksolution2} that
	\begin{align*}
	&\int\nolimits_{\Omega_1}u_0(x)\Psi_T(0)\Delta\left(\phi_0(x)\varphi_T^\ell(x)\right)\mathrm{d}x+\int\nolimits_{\Omega_1}u_1(x)\phi_0(x)\varphi_T^\ell(x)\,\mathrm{d}x\\
	&\leqslant C\;\left(\int\nolimits_{T/2}^T\int\nolimits_{\Omega_1}|u_t(t,x)|^q\varphi(t,x) \,\mathrm{d}x\,\mathrm{d}t\right)^{1/q}+C\;\left(\int\nolimits_{0}^T\int\nolimits_{\nabla\Omega_1}|u_t(t,x)|^q\varphi(t,x) \,\mathrm{d}x\,\mathrm{d}t\right)^{1/q}.
	\end{align*}
	Hence, by letting $T\rightarrow\infty$ and using \eqref{lefthand}, we get a contradiction.
	\medskip
	
	\noindent $\bullet$ \underline{Proof of blow-up for $n=2$.}
	\medskip
	
	\noindent In this case, we have a blow-up result just in the case $1<q<1+1/n=3/2$. By repeating the same calculations in the case of $n\geqslant3$ and using Lemma \ref{lemma4} instead of Lemma \ref{lemma1} (note that the main difference is that $\phi_0(x)\leqslant C\ln (|x|)$), we easily conclude that
	\begin{align*}
	I_1&\leqslant \frac{1}{6} \int\nolimits_0^T\int\nolimits_{\Omega_1}|u_t(t,x)|^q\varphi(t,x) \,\mathrm{d}x\,\mathrm{d}t+ C\ln(T)\,T^{-q'+3},\\
	I_2&\leqslant \frac{1}{6} \int\nolimits_0^T\int\nolimits_{\Omega_1}|u_t(t,x)|^q\varphi(t,x) \,\mathrm{d}x\,\mathrm{d}t+ C\,T^{-2q'+3}+C\,\ln(T)\,T^{-2q'+3},\\
	I_3&\leqslant \frac{1}{6} \int\nolimits_0^T\int\nolimits_{\Omega_1}|u_t(t,x)|^q\varphi(t,x) \,\mathrm{d}x\,\mathrm{d}t+ C\,T^{-q'+3}+C\,\ln(T)\,T^{-q'+3}.
	\end{align*}
	This implies that
	\begin{align*}
	\int\nolimits_{\Omega_1}u_0(x)\Psi_T(0)\Delta\left(\phi_0(x)\varphi_T^\ell(x)\right)\mathrm{d}x+\int\nolimits_{\Omega_1}u_1(x)\phi_0(x)\varphi_T^\ell(x)\,\mathrm{d}x\leqslant C\ln(T)\,T^{-q'+3}\leqslant C\,T^{(-q'+3)/2},
	\end{align*}
	where we have used, e.g., the fact that $\ln(T)\leqslant C\,T^{(q'-3)/2}$. By letting $T$ goes to infinity and using our assumption $q<3/2$, we obtain the desired contradiction.\medskip
	
	\noindent $\bullet$ \underline{Proof of blow-up for $n=1$.}
	\medskip
	
	\noindent For the case $1<q<\frac{2+\sqrt{5}}{1+\sqrt{5}}=\frac{2\alpha+1}{2\alpha}$, where $\alpha:=\frac{1+\sqrt{5}}{2}$ is the positive root of the quadratic equation $\alpha^2-\alpha-1=0$, following the similar procedure as in the case of $n\geqslant 3$ and making use of Lemma \ref{lemma2} rather than Lemma \ref{lemma1}, the following estimates hold:
	\begin{align*}
	I_1&\leqslant \frac{1}{6} \int\nolimits_0^T\int\nolimits_{\Omega_1}|u_t(t,x)|^q\varphi(t,x) \,\mathrm{d}x\,\mathrm{d}t+ C\,T^{-q'+2\alpha+1},\\
	I_2&\leqslant \frac{1}{6} \int\nolimits_0^T\int\nolimits_{\Omega_1}|u_t(t,x)|^q\varphi(t,x) \,\mathrm{d}x\,\mathrm{d}t+C\,T^{-\alpha q'+\alpha+1}+C\,T^{-2\alpha q'+2\alpha+1},\\
	I_3&\leqslant \frac{1}{6} \int\nolimits_0^T\int\nolimits_{\Omega_1}|u_t(t,x)|^q\varphi(t,x) \,\mathrm{d}x\,\mathrm{d}t +C\,T^{-(\alpha-1)q'+\alpha+1}+C\,T^{-(2\alpha-1)q'+2\alpha+1}.
	\end{align*}
	Using the change of variables: $y=T^{-\alpha}x,$ $s=T^{-1}t$, we get a contradiction from $\eqref{weaksolution3}$ by letting $T\rightarrow\infty$.\\
	For the critical case $q=\frac{2+\sqrt{5}}{1+\sqrt{5}}$, we get the contradiction by applying a similar calculation as in the case $n\geqslant 3$ above by taking into account the support of $\partial_x\varphi_T(x)$, $\partial_x^2\varphi_T(x)$ and $\partial_t\eta_T(t)$. This completes the proof of Theorem $\ref{a=0}$.
\end{proof}
\medskip

\begin{proof}[Proof of Theorem \ref{a>0}]
	We may claim by contradiction that $u$ is 
	a global (in time) solution of \eqref{eq1}. Let us apply some integration by parts and some properties of test function to deduce the next equality:
	\begin{align}\label{newweaksolution(a>0)}
	&I+J+\int\nolimits_{\Omega}u_1(x)\varphi(0,x)\,\mathrm{d}x-\int\nolimits_{\Omega}u_0(x)\Delta \varphi(0,x)\,\mathrm{d}x-\int\nolimits_{\Omega}u_0(x)\varphi_t(0,x)\,\mathrm{d}x\notag\\
	&=\int\nolimits_0^T\int\nolimits_{\Omega}u(t,x)\varphi_{tt}(t,x)\,\mathrm{d}x\,\mathrm{d}t+\int\nolimits_0^T\int\nolimits_{\Omega}u(t,x)\Delta\varphi_t(t,x)\,\mathrm{d}x\,\mathrm{d}t-\int\nolimits_0^T\int\nolimits_{\Omega}u(t,x)\Delta\varphi(t,x)\,\mathrm{d}x\,\mathrm{d}t\notag\\
	&=:I_1+I_2+I_3,
	\end{align}
	where we denote
	\begin{align*}I:=\int\nolimits_0^T\int\nolimits_{\Omega}|u(t,x)|^{p}\varphi(t,x)\,\mathrm{d}x\,\mathrm{d}t\quad\mbox{and}\quad J:=\int\nolimits_0^T\int\nolimits_{\Omega}|u_t(t,x)|^{q}\varphi(t,x)\,\mathrm{d}x\,\mathrm{d}t,
	\end{align*}
	for all $T>0$ and all compactly supported function $\varphi\in \ml{C}^2([0,T]\times\Omega)$ 
	such that $\varphi(T,x)=0$ and $\varphi_t(T,x)=0$. We define the test function
	\begin{align*}
	\varphi(t,x):=\phi_0(x)\varphi^\ell_T(x)\eta_T^{k}(t),
	\end{align*}
	where $\phi_0$ is defined in Lemma \ref{lemma1}, \ref{lemma4} and \ref{lemma2}, the test functions $\varphi_T(t),\eta_T(x)$ are the same in the proof of Theorem \ref{a=0}.
	
	Again, we discuss the proof into three parts: $n\geqslant 3$, $n=2$ and $n=1$, respectively.\medskip
	
	\noindent $\bullet$ \underline{Proof of blow-up for $n\geqslant 3$.}
	\medskip
	
	Now, we should find a suitable combination of integration by parts. In other words, we will answer how to do integration by parts on the right-hand sides of the equation in \eqref{newweaksolution(a>0)}. Thus, all the possibilities should be shown.
	
	Applying Lemma \ref{lemma1} and Young's inequality, we can derive the following estimates for $I_j$ with $j=1,2,3$:\\
	\noindent \underline{Part I:} The possibilities of estimates of $I_1$ are
	\begin{align*}
	\mathbb{P}_1^{(1)}:\quad &I_1=\int\nolimits_0^{T}\int\nolimits_{\Omega}u(t,x)\phi_0(x)\partial_t^2\left(\eta^k_T(t)\right)\varphi^\ell_T(x)\,\mathrm{d}x\,\mathrm{d}t\leqslant\varepsilon I+C\,T^{n+1-2p'},\\
	\mathbb{P}_1^{(2)}:\quad &I_1=-\int\nolimits_0^{T}\int\nolimits_{\Omega}u_t(t,x)\phi_0(x)\partial_t\left(\eta^k_T(t)\right)\varphi^\ell_T(x)\,\mathrm{d}x\,\mathrm{d}t\leqslant\varepsilon J+C\,T^{n+1-q'}.
	\end{align*}
	\noindent \underline{Part II:} The possibilities of the estimates of $I_2$.
	\begin{align*}
	\mathbb{P}_2^{(1)}:\quad &I_2=\int\nolimits_0^{T}\int\nolimits_{\Omega}u(t,x)\partial_t\left(\eta^k_T(t)\right)\Delta\left(\phi_0(x)\varphi^\ell_T(x)\right)\mathrm{d}x\,\mathrm{d}t  \leqslant\varepsilon I+C\,T^{n+1-3p'},\\
	\mathbb{P}_2^{(2)}:\quad  &I_2=-\int\nolimits_0^{T}\int\nolimits_{\Omega}u_t(t,x)\eta^k_T(t)\Delta\left(\phi_0(x)\varphi^\ell_T(x)\right)\mathrm{d}x\,\mathrm{d}t-\int\nolimits_{\Omega}u_0(x)\Delta\left(\phi_0(x)\varphi^\ell_T(x)\right)\mathrm{d}x\\
	&\quad\leqslant \varepsilon J+C\,T^{n+1-2q'}-\int\nolimits_{\Omega}u_0(x)\Delta\left(\phi_0(x)\varphi^\ell_T(x)\right)\mathrm{d}x.
	\end{align*}
	\noindent \underline{Part III:} The possibilities of the estimates of $I_3$.
	\begin{align*}
	\mathbb{P}_3^{(1)}:\quad &I_3=-\int\nolimits_0^{T}\int\nolimits_{\Omega}u(t,x)\eta^k_T(t)\Delta\left(\phi_0(x)\varphi^\ell_T(x)\right)\mathrm{d}x\,\mathrm{d}t  \leqslant\varepsilon I+C\,T^{n+1-2p'},\\
	\mathbb{P}_3^{(2)}:\quad  &I_3=-\int\nolimits_0^{T}\int\nolimits_{\Omega}u_t(t,x)\Psi_T(t)\Delta\left(\phi_0(x)\varphi^\ell_T(x)\right)\mathrm{d}x\,\mathrm{d}t-\int\nolimits_{\Omega}u_0(x)\Psi_T(0)\Delta\left(\phi_0(x)\varphi^\ell_T(x)\right)\mathrm{d}x\\
	&\quad\leqslant\varepsilon J+C\,T^{n+1-q'}-\int\nolimits_{\Omega}u_0(x)\Psi_T(0)\Delta\left(\phi_0(x)\varphi^\ell_T(x)\right)\mathrm{d}x.
	\end{align*}
	In the above, we take the positive constant $\varepsilon\in(0,1/6)$.
	
	Now, we need to distinguish between the following eight cases.
	\begin{table}[http]
		\centering
		\begin{tabular}{|c | c | c| c |}
			\hline
			& Estimate for $I_1$ & Estimate for $I_2$  & Estimate for $I_3$  \\ 
			\hline
			\hline
			Case	1  & $\mathbb{P}_1^{(1)}$ & $\mathbb{P}_2^{(1)}$ & $\mathbb{P}_3^{(1)}$ \\
			\hline
			Case 2 & $\mathbb{P}_1^{(1)}$ & $\mathbb{P}_2^{(1)}$ & $\mathbb{P}_3^{(2)}$\\
			\hline
			Case 3 & $\mathbb{P}_1^{(1)}$ & $\mathbb{P}_2^{(2)}$ & $\mathbb{P}_3^{(1)}$\\
			\hline
			Case 4 & $\mathbb{P}_1^{(2)}$ & $\mathbb{P}_2^{(1)}$ & $\mathbb{P}_3^{(1)}$\\
			\hline
			Case 5 & $\mathbb{P}_1^{(2)}$ & $\mathbb{P}_2^{(2)}$ & $\mathbb{P}_3^{(2)}$\\
			\hline
			Case 6 & $\mathbb{P}_1^{(1)}$ & $\mathbb{P}_2^{(2)}$ & $\mathbb{P}_3^{(2)}$\\
			\hline
			Case 7 & $\mathbb{P}_1^{(2)}$ & $\mathbb{P}_2^{(1)}$ & $\mathbb{P}_3^{(2)}$\\
			\hline
			Case 8 & $\mathbb{P}_1^{(2)}$ & $\mathbb{P}_2^{(2)}$ & $\mathbb{P}_3^{(1)}$\\
			\hline
		\end{tabular}
		\caption{Combination of the estimates $I_1$, $I_2$ and $I_3$ for $n\geqslant 3$}
	\end{table}
	
	By straightforward computations, we find that Cases 1 and 5 are sufficient for us to prove blow-up result.
	
	More precisely, we may directly derive in Case 1
	\begin{align*}
	I+J+\int\nolimits_{\Omega}u_1(x)\phi_0(x)\varphi_T^\ell(x)\,\mathrm{d}x-\int\nolimits_{\Omega}u_0(x)\Delta\left(\phi_0(x)\varphi_T^\ell(x)\right)\mathrm{d}x\leqslant 3\varepsilon I+C\left(T^{n+1-2p'}+T^{n+1-3p'}\right).
	\end{align*}
	Let us take the next assumption:
	\begin{align}\label{Ass.Data}
	\int\nolimits_{\Omega}u_1(x)\phi_0(x)\,\mathrm{d}x>0.
	\end{align}
	Thus, we can conclude the blow-up of solution providing that
	\begin{align*}
	1<p\leqslant 1+\frac{2}{n-1}\quad\mbox{and}\quad 1<q.
	\end{align*}
	
	In Case 5, it is easy to get
	\begin{align*}
	&I+J+\int\nolimits_{\Omega}u_1(x)\phi_0(x)\varphi_T^\ell(x)\,\mathrm{d}x+\int\nolimits_{\Omega}u_0(x)\Psi_T(0)\Delta\left(\phi_0(x)\varphi^\ell_T(x)\right)\mathrm{d}x\\
	&\leqslant 3\varepsilon J+C\left(T^{n+1-2q'}+T^{n+1-q'}\right).
	\end{align*}
	Again, assuming \eqref{Ass.Data} for initial data, we get the blow-up result when
	\begin{align*}
	1<p\quad\mbox{and}\quad 1<q\leqslant 1+\frac{1}{n}.
	\end{align*}
	
	In conclusion, we complete the proof for $n\geqslant 3$.
	\medskip
	
	\noindent $\bullet$ \underline{Proof of blow-up for $n=2$.}
	\medskip
	
	\noindent Let us derive the estimates for $I_1$, $I_2$ and $I_3$ by employing Lemma \ref{lemma4}.\\
	\noindent \underline{Part I:} The possibilities of the estimates of $I_1$.
	\begin{align*}
	\mathbb{Q}_1^{(1)}:\quad &I_1=\int\nolimits_0^{T}\int\nolimits_{\Omega}u(t,x)\phi_0(x)\partial_t^2\left(\eta^k_T(t)\right)\varphi^\ell_T(x)\,\mathrm{d}x\,\mathrm{d}t\leqslant \varepsilon I+C\ln(T)T^{n+1-2p'},\\
	\mathbb{Q}_1^{(2)}:\quad &I_1=-\int\nolimits_0^{T}\int\nolimits_{\Omega}u_t(t,x)\phi_0(x)\partial_t\left(\eta^k_T(t)\right)\varphi^\ell_T(x)\,\mathrm{d}x\,\mathrm{d}t\leqslant\varepsilon J+C\ln(T)T^{n+1-q'}.
	\end{align*}
	\noindent \underline{Part II:} The possibilities of the estimates of $I_2$.
	\begin{align*}
	\mathbb{Q}_2^{(1)}:\quad &I_2=\int\nolimits_0^{T}\int\nolimits_{\Omega}u(t,x)\partial_t\left(\eta^k_T(t)\right)\Delta\left(\phi_0(x)\varphi^\ell_T(x)\right)\mathrm{d}x\,\mathrm{d}t  \leqslant \varepsilon I+C\ln(T)T^{n+1-3p'}+C\,T^{n+1-3p'},\\
	\mathbb{Q}_2^{(2)}:\quad  &I_2=-\int\nolimits_0^{T}\int\nolimits_{\Omega}u_t(t,x)\eta^k_T(t)\Delta\left(\phi_0(x)\varphi^\ell_T(x)\right)\mathrm{d}x\,\mathrm{d}t-\int\nolimits_{\Omega}u_0(x)\Delta\left(\phi_0(x)\varphi^\ell_T(x)\right)\mathrm{d}x\\
	&\quad\leqslant \varepsilon J+C\ln(T)T^{n+1-2q'}+C\,T^{n+1-2q'}-\int\nolimits_{\Omega}u_0(x)\Delta\left(\phi_0(x)\varphi^\ell_T(x)\right)\mathrm{d}x.
	\end{align*}
	\noindent \underline{Part III:} The possibilities of the estimates of $I_3$.
	\begin{align*}
	\mathbb{Q}_3^{(1)}:\quad &I_3=-\int\nolimits_0^{T}\int\nolimits_{\Omega}u(t,x)\eta^k_T(t)\Delta\left(\phi_0(x)\varphi^\ell_T(x)\right)\mathrm{d}x\,\mathrm{d}t  \leqslant \varepsilon I+C\,T^{n+1-2p'}+C\ln(T)T^{n+1-2p'},\\
	\mathbb{Q}_3^{(2)}:\quad  &I_3=-\int\nolimits_0^{T}\int\nolimits_{\Omega}u_t(t,x)\Psi_T(t)\Delta\left(\phi_0(x)\varphi^\ell_T(x)\right)\mathrm{d}x\,\mathrm{d}t-\int\nolimits_{\Omega}u_0(x)\Psi_T(0)\Delta\left(\phi_0(x)\varphi^\ell_T(x)\right)\mathrm{d}x\\
	&\quad\leqslant \varepsilon J+C\,T^{n+1-q'}+C\ln(T)T^{n+1-q'}-\int\nolimits_{\Omega}u_0(x)\Psi_T(0)\Delta\left(\phi_0(x)\varphi^\ell_T(x)\right)\mathrm{d}x.
	\end{align*}
	In the above,  the positive constant is chosen by $\varepsilon\in(0,1/6)$.\\
	Now, we need to distinguish between the following eight cases in the next table.\\
	\begin{table}[http]
		\centering
		\begin{tabular}{|c | c | c| c |}
			\hline
			& Estimate for $I_1$ & Estimate for $I_2$  & Estimate for $I_3$  \\ 
			\hline
			\hline
			Case	1  & $\mathbb{Q}_1^{(1)}$ & $\mathbb{Q}_2^{(1)}$ & $\mathbb{Q}_3^{(1)}$ \\
			\hline
			Case 2 & $\mathbb{Q}_1^{(1)}$ & $\mathbb{Q}_2^{(1)}$ & $\mathbb{Q}_3^{(2)}$\\
			\hline
			Case 3 & $\mathbb{Q}_1^{(1)}$ & $\mathbb{Q}_2^{(2)}$ & $\mathbb{Q}_3^{(1)}$\\
			\hline
			Case 4 & $\mathbb{Q}_1^{(2)}$ & $\mathbb{Q}_2^{(1)}$ & $\mathbb{Q}_3^{(1)}$\\
			\hline
			Case 5 & $\mathbb{Q}_1^{(2)}$ & $\mathbb{Q}_2^{(2)}$ & $\mathbb{Q}_3^{(2)}$\\
			\hline
			Case 6 & $\mathbb{Q}_1^{(1)}$ & $\mathbb{Q}_2^{(2)}$ & $\mathbb{Q}_3^{(2)}$\\
			\hline
			Case 7 & $\mathbb{Q}_1^{(2)}$ & $\mathbb{Q}_2^{(1)}$ & $\mathbb{Q}_3^{(2)}$\\
			\hline
			Case 8 & $\mathbb{Q}_1^{(2)}$ & $\mathbb{Q}_2^{(2)}$ & $\mathbb{Q}_3^{(1)}$\\
			\hline
		\end{tabular}
		\caption{Combination of the estimates $I_1$, $I_2$ and $I_3$ for $n=2$}
	\end{table}
	
	Actually, in order to prove our result, we just need to consider Cases 1 and 5, respectively.
	
	For one thing, concerning Case 1, we may obtain the estimate
	\begin{align*}
	&I+J+\int\nolimits_{\Omega}u_1(x)\phi_0(x)\varphi_T^\ell(x)\,\mathrm{d}x-\int\nolimits_{\Omega}u_0(x)\Delta\left(\phi_0(x)\varphi_T^\ell(x)\right)\mathrm{d}x\\
	&\leqslant 3\varepsilon I+C\left(T^{n+1-2p'}+T^{n+1-3p'}\right)+C\ln(T)\left(T^{n+1-2p'}+T^{n+1-3p'}\right).
	\end{align*}
	By assumption \eqref{Ass.Data}, the blow-up of solution can be derived if
	\begin{align*}
	1<p<1+\frac{2}{n-1}\quad \mbox{and}\quad 1<q.
	\end{align*}
	
	For another, taking the consideration of Case 5, one has
	\begin{align*}
	&I+J+\int\nolimits_{\Omega}u_1(x)\phi_0(x)\varphi_T^\ell(x)\,\mathrm{d}x+\int\nolimits_{\Omega}u_0(x)\Psi_T(0)\Delta\left(\phi_0(x)\varphi^\ell_T(x)\right)\mathrm{d}x\\
	&\leqslant 3\varepsilon J+C\left(T^{n+1-2q'}+T^{n+1-q'}\right)+C\ln(T)\left(T^{n+1-2q'}+T^{n+1-q'}\right).
	\end{align*}
	With the assumption \eqref{Ass.Data}, the solution blows up providing that
	\begin{align*}
	1<p\quad\mbox{and}\quad1<q<1+\frac{1}{n}.
	\end{align*}
	
	Then, they complete the proof in the case $n=2$.

	\medskip
	
	\noindent $\bullet$ \underline{Proof of blow-up for $n=1$.}
	\medskip
	
	\noindent The application of Lemma \ref{lemma2} with the change of variables such that $y=T^{-\alpha}x$ and $s=T^{-1}t$, where $\alpha>1$, implies the next estimates.\\
	\noindent \underline{Part I:} The possibilities of the estimates of $I_1$.
	\begin{align*}
	\mathbb{W}_1^{(1)}:\quad &I_1=\int\nolimits_0^{T}\int\nolimits_{\Omega}u(t,x)\phi_0(x)\partial_t^2\left(\eta^k_T(t)\right)\varphi^\ell_T(x)\,\mathrm{d}x\,\mathrm{d}t\leqslant \varepsilon I+C\,T^{-2p'+2\alpha+1},\\
	\mathbb{W}_1^{(2)}:\quad &I_1=-\int\nolimits_0^{T}\int\nolimits_{\Omega}u_t(t,x)\phi_0(x)\partial_t\left(\eta^k_T(t)\right)\varphi^\ell_T(x)\,\mathrm{d}x\,\mathrm{d}t\leqslant\varepsilon J+C\,T^{-q'+2\alpha+1}.
	\end{align*}
	
	\noindent \underline{Part II:} The possibilities of the estimates of $I_2$.
	\begin{align*}
	\mathbb{W}_2^{(1)}:\,\,\,\, &I_2=\int\nolimits_0^{T}\int\nolimits_{\Omega}u(t,x)\partial_t\left(\eta^k_T(t)\right)\partial_x^2\left(\phi_0(x)\varphi^\ell_T(x)\right)\mathrm{d}x\,\mathrm{d}t  \leqslant\varepsilon I+C\,T^{-(\alpha+1) (p'-1)}+C\,T^{-(2\alpha+1) (p'-1)},\\
	\mathbb{W}_2^{(2)}:\,\,\,\,  &I_2=-\int\nolimits_0^{T}\int\nolimits_{\Omega}u_t(t,x)\eta^k_T(t)\partial_x^2\left(\phi_0(x)\varphi^\ell_T(x)\right)\mathrm{d}x\,\mathrm{d}t-\int\nolimits_{\Omega}u_0(x)\partial_x^2\left(\phi_0(x)\varphi^\ell_T(x)\right)\mathrm{d}x\\
	&\quad\leqslant \varepsilon J+C\,T^{-\alpha q'+\alpha+1}+C\,T^{-2\alpha q'+2\alpha+1}-\int\nolimits_{\Omega}u_0(x)\partial_x^2\left(\phi_0(x)\varphi^\ell_T(x)\right)\mathrm{d}x.
	\end{align*}
	
	\noindent \underline{Part III:} The possibilities of the estimates of $I_3$.
	\begin{align*}
	\mathbb{W}_3^{(1)}:\quad &I_3=-\int\nolimits_0^{T}\int\nolimits_{\Omega}u(t,x)\eta^k_T(t)\partial_x^2\left(\phi_0(x)\varphi^\ell_T(x)\right)\mathrm{d}x\,\mathrm{d}t  \leqslant\varepsilon I+C\,T^{-\alpha p'+\alpha+1}+C\,T^{-2\alpha p'+2\alpha+1},\\
	\mathbb{W}_3^{(2)}:\quad  &I_3=-\int\nolimits_0^{T}\int\nolimits_{\Omega}u_t(t,x)\Psi_T(t)\partial_x^2\left(\phi_0(x)\varphi^\ell_T(x)\right)\mathrm{d}x\,\mathrm{d}t-\int\nolimits_{\Omega}u_0(x)\Psi_T(0)\partial_x^2\left(\phi_0(x)\varphi^\ell_T(x)\right)\mathrm{d}x\\
	&\quad\leqslant\varepsilon J+C\,T^{-(\alpha-1)q'+\alpha+1}+C\,T^{-(2\alpha-1)q'+2\alpha+1}-\int\nolimits_{\Omega}u_0(x)\Psi_T(0)\partial_x^2\left(\phi_0(x)\varphi^\ell_T(x)\right)\mathrm{d}x.
	\end{align*}
	We may take the positive constant by $\varepsilon\in(0,1/6)$ in the above estimates.\\
	
	In the next table, let us now distinguish between the following eight cases.
	\begin{table}[http]
		\centering
		\begin{tabular}{|c | c | c| c |}
			\hline
			& Estimate for $I_1$ & Estimate for $I_2$  & Estimate for $I_3$  \\ 
			\hline
			\hline
			Case	1  & $\mathbb{W}_1^{(1)}$ & $\mathbb{W}_2^{(1)}$ & $\mathbb{W}_3^{(1)}$ \\
			\hline
			Case 2 & $\mathbb{W}_1^{(1)}$ & $\mathbb{W}_2^{(1)}$ & $\mathbb{W}_3^{(2)}$\\
			\hline
			Case 3 & $\mathbb{W}_1^{(1)}$ & $\mathbb{W}_2^{(2)}$ & $\mathbb{W}_3^{(1)}$\\
			\hline
			Case 4 & $\mathbb{W}_1^{(2)}$ & $\mathbb{W}_2^{(1)}$ & $\mathbb{W}_3^{(1)}$\\
			\hline
			Case 5 & $\mathbb{W}_1^{(2)}$ & $\mathbb{W}_2^{(2)}$ & $\mathbb{W}_3^{(2)}$\\
			\hline
			Case 6 & $\mathbb{W}_1^{(1)}$ & $\mathbb{W}_2^{(2)}$ & $\mathbb{W}_3^{(2)}$\\
			\hline
			Case 7 & $\mathbb{W}_1^{(2)}$ & $\mathbb{W}_2^{(1)}$ & $\mathbb{W}_3^{(2)}$\\
			\hline
			Case 8 & $\mathbb{W}_1^{(2)}$ & $\mathbb{W}_2^{(2)}$ & $\mathbb{W}_3^{(1)}$\\
			\hline
		\end{tabular}
		\caption{Combination of the estimates $I_1$, $I_2$ and $I_3$ for $n=1$}
	\end{table}
	
	Indeed, it is enough for us to focus on Cases 1, 2, 4 and 5 by straightforward calculations.
	
	Precisely, we may have the estimate in Case 1 that
	\begin{align*}
	&I+J+\int\nolimits_{\Omega}u_1(x)\phi_0(x)\varphi_T^\ell(x)\,\mathrm{d}x-\int\nolimits_{\Omega}u_0(x)\partial_x^2\left(\phi_0(x)\varphi_T^\ell(x)\right)\mathrm{d}x\\
	&\leqslant 3\varepsilon I+C\left(T^{-2p'+2\alpha+1}+T^{-(\alpha+1) (p'-1)}+T^{-\alpha p'+\alpha+1}\right).
	\end{align*}
	Consequently, with the assumption \eqref{Ass.Data} the solution blows up in finite time when
	\begin{align*}
	1<p\leqslant 1+\alpha_1\approx 2.28 \quad\mbox{and}\quad 1<q.
	\end{align*}
	
	Next, the simple calculation in Case 2 implies
	\begin{align*}
	&I+J+\int\nolimits_{\Omega}u_1(x)\phi_0(x)\varphi_T^\ell(x)\,\mathrm{d}x-\int\nolimits_{\Omega}u_0(x)\partial_x^2\left(\phi_0(x)\varphi_T^\ell(x)\right)\mathrm{d}x+\int\nolimits_{\Omega}u_0(x)\Psi_T(0)\partial_x^2\left(\phi_0(x)\varphi^\ell_T(x)\right)\mathrm{d}x\\
	&\leqslant 2\varepsilon I+\varepsilon J+C\left(T^{-2p'+2\alpha+1}+T^{-(\alpha+1) (p'-1)}+ T^{-(\alpha-1)q'+\alpha+1}+T^{-(2\alpha-1)q'+2\alpha+1}\right).
	\end{align*}
	One may claim blow-up of solutions under the assumption \eqref{Ass.Data} and the condition
	\begin{align*}
	1<p\leqslant \frac{2\alpha+1}{2\alpha-1}\quad\mbox{and}\quad 1<q\leqslant \frac{\alpha+1}{2}.
	\end{align*}
	
	In Case 4, we compute
	%
	\begin{align*}
	&I+J+\int\nolimits_{\Omega}u_1(x)\phi_0(x)\varphi_T^\ell(x)\,\mathrm{d}x-\int\nolimits_{\Omega}u_0(x)\partial_x^2\left(\phi_0(x)\varphi_T^\ell(x)\right)\mathrm{d}x\\
	&\leqslant 2\varepsilon I+\varepsilon J+C\left(T^{-q'+2\alpha+1}+T^{-(\alpha+1) (p'-1)}+T^{-\alpha p'+\alpha+1}\right).
	\end{align*}
	Moreover, by considering the assumption \eqref{Ass.Data}, the solution blows up in finite time when
	\begin{align*}
	1<p\leqslant \alpha+1\quad\mbox{and}\quad 1<q\leqslant \frac{2\alpha+1}{2\alpha}.
	\end{align*}
	
	Finally, we may obtain the estimate in case 5
	\begin{align*}
	&I+J+\int\nolimits_{\Omega}u_1(x)\phi_0(x)\varphi_T^\ell(x)\,\mathrm{d}x+\int\nolimits_{\Omega}u_0(x)\Psi_T(0)\partial_x^2\left(\phi_0(x)\varphi^\ell_T(x)\right)\mathrm{d}x\\
	&\leqslant 3\varepsilon J+C\left(T^{-q'+2\alpha+1}+T^{-\alpha q'+\alpha+1}+T^{-(\alpha-1)q'+\alpha+1}\right).
	\end{align*}
	We conclude blow-up of solution providing that the assumption \eqref{Ass.Data} is fulfilled and
	\begin{align*}
	1<p \quad\mbox{and}\quad 1<q\leqslant \frac{1+\alpha_2}{2}\approx 1.3.
	\end{align*}

	Again, it is enough to consider just Case 1, Case 2 (for $\alpha<\alpha_1$), Case 4 (for $\alpha_1<\alpha<\alpha_2$), and Case 5. Combining with the results from these four cases, we immediately complete the proof in the case $n=1$.
\end{proof}

\subsection*{Acknowledgments} The Ph.D. study of Wenhui Chen is supported by S\"achsiches Landesgraduiertenstipendium.


\end{document}